\theoremstyle{plain} 
\newtheorem{thm}{Theorem}[section] 
\newtheorem{lem}[thm]{Lemma} 
\newtheorem{rmk}[thm]{Remark}
\theoremstyle{definition} 
\newcommand{\eps}{\varepsilon}
\numberwithin{equation}{section}
\author[D.~Mastrostefano]{Daniele Mastrostefano}
\address{University of Warwick, Mathematics Institute, Zeeman Building, Coventry, CV4 7AL, UK}
\email{Daniele.Mastrostefano@warwick.ac.uk}
\keywords{Random multiplicative functions; law of iterated logarithm; Borel--Cantelli lemma; sums of independent random variables; low moments}
\subjclass[2010]{Primary: 11K65. Secondary: 11N64.}
\begin{document}
\title[Almost sure upper bound random multiplicative functions]{An almost sure upper bound for random multiplicative functions on integers with a large prime factor}\thanks{The author is funded by a Departmental Award and by an EPSRC Doctoral Training Partnership Award. The present work was carried out when the author was a third year PhD student at the University of Warwick.}

\begin{abstract}
Let $f$ be a Rademacher or a Steinhaus random multiplicative function. Let $\eps>0$ small. We prove that, as $x\rightarrow +\infty$, we almost surely have
$$\bigg|\sum_{\substack{n\leq x\\ P(n)>\sqrt{x}}}f(n)\bigg|\leq\sqrt{x}(\log\log x)^{1/4+\eps},$$
where $P(n)$ stands for the largest prime factor of $n$. This gives an indication of the almost sure size of the largest fluctuations of $f$.
\end{abstract}

\maketitle

\section{Introduction}
A fundamental and classical problem in Analytic Number Theory concerns demonstrating squareroot cancellation for the partial sums of the M\"obius function $\mu(n)$, defined as the multiplicative function supported on the squarefree numbers and attaining value $-1$ on the primes. More precisely, one ponders the validity of the following statement:
\begin{align*}
\sum_{n\leq x} \mu(n)\ll_\eps x^{1/2+\eps}
\end{align*}
for all $\eps>0$ and $x$ large with respect to $\eps$, which is equivalent to the Riemann hypothesis (see Soundararajan \cite{SOUND} for a refinement of such relation).

To investigate this problem, Wintner \cite{W}, in 1944, introduced the following model for $\mu(n)$:

\emph{a Rademacher random multiplicative function $f$ is a multiplicative function supported on the squarefree integers and defined on the prime numbers $p$ by letting the $f(p)$ be independent
random variables taking values $\pm 1$ with probability $1/2$ each.}

Clearly, $f$ and $\mu$ are both multiplicative, supported on the squarefree numbers and take values $\pm 1$. So, a Rademacher random multiplicative function represents a reasonable heuristic model for the M\"obius function (see the introduction to \cite{LTW} for more discussion about this).

In fact, Wintner himself \cite{W} was able to show that, for any fixed $\eps>0$, one almost surely has
\begin{align*}
&\sum_{n\leq x} f(n) = O(x^{1/2+\eps})\\
&\sum_{n\leq x} f(n) \neq O(x^{1/2-\eps}),
\end{align*}  
thus implying that the Riemann hypothesis is ``almost always'' (related to this probabilistic model) true.

These results have been later improved in an unpublished work by Erd\H{o}s \cite{E} and after by Hal\'{a}sz \cite{H}, culminating in the work of Basquin \cite{BA} and independently Lau, Tenenbaum and Wu \cite{LTW}, who gave, for any $\eps>0$, the following almost sure upper bound
\begin{align}
\label{LTWresult}
\bigg|\sum_{n\leq x}f(n)\bigg|\leq \sqrt{x}(\log \log x)^{2+\eps}\ \text{as $x\rightarrow +\infty$}.
\end{align}
On the opposite side, Harper \cite{H2}, improving on his own previous result \cite{H1}, showed that, for any function $V(x)$ tending to infinity with $x$, there almost surely exist arbitrarily large values of $x$ for which
\begin{align}
\label{Adamresult}
\bigg|\sum_{n\leq x} f(n)\bigg|\geq \frac{\sqrt{x}(\log\log x)^{1/4}}{V(x)}. 
\end{align}
This result also holds for Steinhaus random multiplicative functions $f$, where $\{f(p)\}_{p\ \text{prime}}$ is a sequence of independent Steinhaus random variables (i.e. distributed uniformly on the
unit circle $\{|z| = 1\}$) and the function $f$ is taken to be completely multiplicative. 

He achieved \eqref{Adamresult} by reducing the problem to showing a similar statement, but where the sum runs only over integers with the largest prime factor $>\sqrt{x}$ (actually, he worked with a slightly different condition, but his argument may be adapted to this case). At the same time, he localised the problem by considering such statement only for a collection of values of $x$ simultaneously. Then, he showed a  multivariate Gaussian approximation for such sums, conditional on the behaviour of $f$ at small primes. Finally, he controlled the interaction among these sums as $x$ varies, showing conditional almost independency, through a careful and delicate study of the size of their covariances.

In particular, as a consequence of the proof of \eqref{Adamresult}, we may infer that there almost surely exist arbitrarily large values of $x$ for which
\begin{align}
\label{Adamresult2}
\bigg|\sum_{\substack{n\leq x\\ P(n)>\sqrt{x}}} f(n)\bigg|\geq\sqrt{x}(\log\log x)^{1/4+o(1)}.
\end{align}
The bounds \eqref{LTWresult} and \eqref{Adamresult} together give the feeling of the existence of a \emph{Law of the Iterated Logarithm} for the partial sums of $f(n)$. 

For any sequence of \emph{independent} random variables $(\eps_n)_{n=1}^{+\infty}$ taking values $\pm 1$ with probability $1/2$ each, Khintchine's Law of
the Iterated Logarithm consists in the following almost sure statements:
$$\limsup_{x\rightarrow +\infty}\frac{\sum_{n\leq x}\eps_n}{\sqrt{2x\log\log x}}=1\ \ \ \text{and}\ \ \ \liminf_{x\rightarrow +\infty}\frac{\sum_{n\leq x}\eps_n}{\sqrt{2x\log\log x}}=-1.$$
See for instance Gut \cite[Ch. 8]{G} for an extensive account of this result. 
To compare, for large $x$, the sum $\sum_{n\leq x}\eps_n$ has a roughly Gaussian distribution with mean zero and variance $x$, by the Central Limit Theorem. So, it typically has size close to $\sqrt{x}$, from which its largest fluctuations are obtained by rescaling this size by a $\sqrt{\log\log x}$ factor, which describes the impact of the dependence amongst the sums $\sum_{n\leq x}\eps_n$ as $x$ varies.

Khintchine's theorem cannot be applied to study random multiplicative functions, because their values are clearly not all independent. Nevertheless, we might believe that a suitable version of the Law of
the Iterated Logarithm might hold for them, in the hope that their multiplicative structure does not completely disrupt statistical cancellations.
However, the exact size of the almost sure largest fluctuations of their partial sums is not yet clear. Following Harper \cite{H2}, in relation to Khintchine's law we might reason that, for a Rademacher or Steinhaus random multiplicative function $f$, it might be obtained by adjusting the typical size $\sqrt{x}/(\log\log x)^{1/4}$ of the partial sums of $f$, which now does no more coincide with their standard deviation $\sqrt{x}$ (see \cite[Corollary 2]{H3}), with the usual Law of the Iterated Logarithm ``correction factor'' $\sqrt{\log\log x}$, that in \eqref{Adamresult} is a result of the previously mentioned multivariate Gaussian approximation. So doing, we arrive at the following prediction:\footnote{The lower bound is already given by \eqref{Adamresult} and the validity of the upper bound is currently being investigated in a joint project with Adam J. Harper.}

\emph{we almost surely have 
\begin{align*}
\bigg|\sum_{n\leq x} f(n)\bigg|\leq \sqrt{x}(\log\log x)^{1/4+o(1)}\ \text{as $x\rightarrow +\infty$}
\end{align*}
and the opposite inequality almost surely holds on a subsequence of points $x$.}

Recall that $P(n)$ indicate the largest prime factor of $n$. The following theorem may be seen as a partial result in this direction.
\begin{thm}
\label{newthm}
Let $f$ be a Rademacher or a Steinhaus random multiplicative function. Let $\eps>0$ small. As $x\rightarrow +\infty$, we almost surely have
$$\bigg|\sum_{\substack{n\leq x\\ P(n)>\sqrt{x}}}f(n)\bigg|\leq\sqrt{x}(\log\log x)^{1/4+\eps}.$$
\end{thm}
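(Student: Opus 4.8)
The plan is to exploit the factorisation forced by the condition $P(n)>\sqrt{x}$: writing such an $n$ uniquely as $n=pm$ with $p$ prime, $p>\sqrt{x}$ and $m\leq x/p<\sqrt{x}$ (so that automatically $P(m)\leq m<\sqrt{x}<p$), we get
\begin{align*}
\sum_{\substack{n\leq x\\ P(n)>\sqrt{x}}}f(n)=\sum_{\sqrt{x}<p\leq x}f(p)\sum_{m\leq x/p}f(m)
\end{align*}
in the Steinhaus case, and the same with $m$ restricted to squarefree integers coprime to $p$ in the Rademacher case (the coprimality costing only a negligible error, since $p>\sqrt{x}\geq m$). The key point is that, conditionally on the values of $f$ at primes $\leq\sqrt{x}$, the inner sums $A(y):=\sum_{m\leq y}f(m)$ are \emph{fixed} numbers, while the $f(p)$ for $p>\sqrt{x}$ are independent of these and of each other; so the whole quantity is a sum of independent (mean-zero) random variables $f(p)A(x/p)$. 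I would first reduce, via a standard dyadic/net argument over $x$ (as in the proof of \eqref{LTWresult}), to bounding this sum for $x$ ranging over a sufficiently sparse sequence, say $x_j$ with $\log x_j\asymp j$ or $x_j=e^{\sqrt{j}}$, and then interpolating for intermediate $x$ using the crude trivial bound on the short interval contributions.

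Next I would estimate the conditional variance $\sigma^2(x):=\sum_{\sqrt{x}<p\leq x}|A(x/p)|^2$ (times $\mathbb{E}|f(p)|^2=1$). By Parseval/mean-square bounds for partial sums of random multiplicative functions — e.g. $\mathbb{E}|A(y)|^2\ll y$, or more precisely the Harper-type refinement giving $\mathbb{E}|A(y)|^2\asymp y/\sqrt{\log\log y}$ — one expects, after summing over $p$ with $\sum_{\sqrt{x}<p\leq x}(x/p)\asymp x$, that $\sigma^2(x)$ is almost surely of size $x/\sqrt{\log\log x}\cdot(1+o(1))$, or at the very least $\ll x$ with high probability. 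The almost sure control of $\sigma^2(x)$ along the sparse sequence is itself a second-moment (Chebyshev plus Borel--Cantelli) computation: bound $\mathbb{E}\,\sigma^2(x)$ and $\mathrm{Var}(\sigma^2(x))$ and sum the tail probabilities. Granting $\sigma^2(x)\ll x$, a conditional Gaussian concentration / Bernstein-type inequality for the sum of independent terms $f(p)A(x/p)$ — whose individual sizes $|A(x/p)|$ are themselves almost surely at most $\sqrt{x/p}\,(\log\log x)^{O(1)}$ by the known upper bound \eqref{LTWresult} applied at scale $x/p$ — gives that, conditionally,
\begin{align*}
\bigg|\sum_{\sqrt{x}<p\leq x}f(p)A(x/p)\bigg|\leq \sigma(x)\sqrt{2\log(1/\delta)}+\text{(small)}
\end{align*}
except on an event of probability $\delta$. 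Choosing $\delta=\delta_j$ so that $\sum_j\delta_j<\infty$ — concretely $\delta_j\asymp j^{-2}$, which forces a $\sqrt{\log j}\asymp\sqrt{\log\log x_j}$ factor — yields the bound $\sqrt{x}\,(\log\log x)^{1/4+\eps}$ after Borel--Cantelli, the exponent $1/4$ coming precisely from $\sigma(x)\asymp x^{1/2}(\log\log x)^{-1/4}$ multiplied by the correction $(\log\log x)^{1/2}$.

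The main obstacle I expect is the rigorous control of the conditional variance $\sigma^2(x)=\sum_{\sqrt{x}<p\leq x}|A(x/p)|^2$: one needs an almost sure upper bound of the right order simultaneously for all $x$ in the sparse sequence, and the random variables $|A(x/p)|^2$ for a fixed $x$ are \emph{not} independent of each other (they are all built from the same $f(m)$, $m<\sqrt{x}$), so a naive union bound is too lossy. I would handle this by computing $\mathbb{E}\,\sigma^2(x)$ exactly via orthogonality ($\mathbb{E}|A(y)|^2=\sum_{m\leq y}1$ in the Steinhaus case, with an analogous squarefree count in the Rademacher case) and then bounding the fourth-moment-type quantity $\mathbb{E}\,\sigma^4(x)=\sum_{p,q}\mathbb{E}(|A(x/p)|^2|A(x/q)|^2)$ using Cauchy--Schwarz together with the fourth moment estimate $\mathbb{E}|A(y)|^4\ll y^2(\log\log y)^{O(1)}$ for random multiplicative functions. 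A secondary technical point is the passage from the sparse sequence back to all $x$ and, in the Rademacher case, the bookkeeping of the coprimality condition $(m,p)=1$ and of squarefreeness of $n=pm$; both are routine but must be done carefully so as not to lose more than a $(\log\log x)^{o(1)}$ factor, which is absorbed into the $\eps$.
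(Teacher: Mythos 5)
Your overall skeleton matches the paper's: the factorisation $\sum_{n\leq x,\,P(n)>\sqrt{x}}f(n)=\sum_{\sqrt{x}<p\leq x}f(p)A(x/p)$ with $A(y)=\sum_{m\leq y}f(m)$, conditioning on the values of $f$ at primes $\leq\sqrt{x}$, a Gaussian-type conditional concentration inequality, test points plus Borel--Cantelli, and the heuristic that the exponent $1/4$ comes from the conditional variance being of size $x/\sqrt{\log\log x}$ rather than $x$. The genuine gap is in how you propose to establish that variance bound. You claim $\mathbb{E}|A(y)|^2\asymp y/\sqrt{\log\log y}$ as a ``Harper-type refinement''; this is false. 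By orthogonality $\mathbb{E}|A(y)|^2\asymp y$ exactly, so $\mathbb{E}\,\sigma^2(x)\asymp x$: the saving $1/\sqrt{\log\log x}$ is a \emph{low-moment} phenomenon (Harper's result is $\mathbb{E}|A(y)|\asymp\sqrt{y}/(\log\log y)^{1/4}$, i.e.\ better-than-squareroot cancellation for moments below the second), reflecting that $|A(y)|^2$ is typically much smaller than its mean, with a heavy tail carrying the expectation. Consequently no Chebyshev/second-moment computation around $\mathbb{E}\,\sigma^2(x)$, nor any fourth-moment refinement (note also $\mathbb{E}|A(y)|^4\asymp y^2\log y$, not $y^2(\log\log y)^{O(1)}$), can ever prove $\sigma^2(x)\ll x/\sqrt{\log\log x}$ with high probability: you cannot concentrate a nonnegative random variable below its mean by bounding its variance. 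Your own write-up betrays this: you only ``grant $\sigma^2(x)\ll x$'', which after the $\sqrt{\log(1/\delta)}\asymp\sqrt{\log\log x}$ union-bound factor yields $\sqrt{x}(\log\log x)^{1/2+\eps}$, not the claimed $(\log\log x)^{1/4+\eps}$.

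The paper's proof of exactly this step is where the real work lies, and it uses machinery your proposal does not supply. The variance $V(x_i)$ is smoothed (logarithmic weights plus an integral average), rewritten via Parseval (Lemma \ref{harmonicanalysisres}) as $x_i$ times an $L^2$-integral of the truncated Euler product $\mathcal{S}_{x_i}(1/2+it)$, and a suitably \emph{normalised} version of this integral is shown to be a nonnegative submartingale in $i$. The event that $V(x_i)\sqrt{\log\log x_i}\leq T x_i$ uniformly over all test points $x_i$ in a block $[X_{\ell-1},X_\ell]$ is then controlled by conditioning on the event $\Sigma_\ell$ in \eqref{eventlowmoments}, whose probability is estimated by Markov for the power $1/2$ together with Harper's low-moment bound $\mathbb{E}[(\int|\mathcal{S}_{X_{\ell-1}}(1/2+it)|^2|1/2+it|^{-2}dt)^{1/2}]\ll(\log X_{\ell-1}/\sqrt{\log\log X_{\ell-1}})^{1/2}$ (the deep input, from the critical multiplicative chaos analysis of \cite{H3}), and by Doob's maximal inequality to pass from the single scale $X_{\ell-1}$ to all intermediate test points without a lossy per-point union bound. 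Even the bookkeeping matters: the low-moment step only gives failure probability $\ll T^{-1/4}$, which is not summable over individual test points, so the paper works with the very sparse blocks $X_\ell=e^{2^{\ell^K}}$, $K=1/(4\eps)$, and takes $T=T(\ell)=\eps^2\ell^8$ so that $\sum_\ell T(\ell)^{-1/4}<\infty$ while $T(\ell)$ is still $(\log\log x)^{O(\eps)}$ and is absorbed into the exponent $\eps$. Without some substitute for this combination (low moments of the Euler product, submartingale/Doob uniformity, and the block structure), your argument as written proves at best the weaker exponent $1/2+\eps$.
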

\begin{rmk}
Considering \eqref{Adamresult2}, the bound in Theorem \ref{newthm} is close to be sharp.
\end{rmk}
\subsection{Sketch of the proof of Theorem \ref{newthm}} 
As usual when seeking to produce almost sure bounds for a sum of random variables, we will reduce our analysis to what happens on a sequence of ``test points'', with the property of being sparse, but not too much to yet easily control the increments of $f$ between two consecutive elements of such sequence. We will then collect together the information we gather from each single point, by means of the first Borel--Cantelli's lemma. For random multiplicative functions this is indeed the approach that was taken by Basquin \cite{BA} and Lau--Tenenbaum--Wu \cite{LTW} and others before, but the way we analyze the distribution of the partial sums of $f(n)$ on test points is a key difference with them. In fact, the study of their distribution is made available through the use of high moments inequalities. To avoid them blowing up, Basquin and Lau--Tenenbaum--Wu split the \emph{full} partial sums of $f(n)$ up into several pieces where the constraints were on the size of the largest prime factor in intervals. Here, we are in an easier setting, since we have to deal with just \emph{one} such intervals where moreover there is a \emph{unique} large prime factor, which leads to improving the efficiency of the high moments bounds. This allows us to saving a $\log\log x$ factor in the estimate of Theorem \ref{newthm} compared to \cite{BA} and \cite{LTW}. More specifically, we note that any positive integer $n\leq x$ with $P(n)>\sqrt{x}$ can be uniquely written as $n=pm$, where $\sqrt{x}<p\leq x$ is a prime and $m\leq x/p$ is a positive integer. Consequently, by multiplicativity, we deduce that
\begin{align}
\label{sumsindeprv}
\sum_{\substack{n\leq x\\ P(n)>\sqrt{x}}}f(n)=\sum_{\sqrt{x}<p\leq x} f(p)\sum_{m\leq x/p}f(m).
\end{align}
Conditional on the value of $f(q)$, for prime numbers $q\leq \sqrt{x}$, the above can be interpreted as a sum of many independent random variables, which conditional probability distribution possesses a conditional \emph{Gaussian} tail, thanks to Hoeffding's inequality. This is exactly how we gain the $\log\log x$ factor mentioned above, by replacing the use of several high moments bounds (one for each of the roughly $\log\log x$ sums related to the size of the largest prime factor, as in \cite{BA} and \cite{LTW}) with that of a single one. 

The use of Hoeffding's inequality constitutes a difference also in relation to Harper's lower bound result \eqref{Adamresult}, where instead, as explained before, there was needed to establish a conditional jointly Gaussian approximation for the partial sums of $f$ over integers with a large prime factor (which required in \cite{H2} a much greater effort).

Our second conditioning will be on the size of a certain smooth weighted version of the conditional variance $V(x)$ of the partial sums in \eqref{sumsindeprv}, which is equal to $\sum_{\sqrt{x}<p\leq x} |\sum_{m\leq x/p}f(m)|^2$. Arguing as in Harper \cite{H3}, we will recast it in terms of an $L^2$-integral of a truncated Euler product corresponding to $f$, which will give rise to a submartingale sequence.

If $(\Omega, \mathcal{F}, \mathbb{P})$ is a probability space with, additionally,
a sequence $\{\mathcal{F}_n\}_{n\geq 0}$ of increasing sub-$\sigma$-algebras of $\mathcal{F}$, which is called a \emph{filtration}, a \emph{submartingale} is a sequence of random variables $(X_n)_{n\geq 0}$ which satisfies, for any time $n$, the following properties:
\begin{align*}
&X_n\ \text{is}\ \mathcal{F}_n-\text{measurable} && \text{($X_n$ is adapted)}\\
&\mathbb{E}[|X_n|]<+\infty && (\text{$X_n$ is integrable})\\
&\mathbb{E}[X_{n+1}| \mathcal{F}_n]\geq X_n \ \text{almost surely} && (\text{$X_n$ is non-decreasing on average}).
\end{align*}
Rewriting a smooth version of the partial sums of $f$ in terms of a submartingale sequence is a common feature with \cite{BA} and \cite{LTW}. However, differently from them, our sequence involves the Euler product of $f$ and, most importantly, we will be able to input \emph{low moments} estimates for the partial sums of $f$ to better bound its size, which will allow us to gain a further $(\log\log x)^{1/4}$ factor compared to \cite{BA} and \cite{LTW}. More precisely, the gain will come from showing that with high probability $V(x)$ has uniformly size close to $x/\sqrt{\log\log x}$, which is what we can pointwise deduce from Harper's low moments estimates \cite{H3}.

To implement such results successfully, we will need to drastically increase the number of test points considered in contrast with \cite{BA} and \cite{LTW}. This will force us to introduce a suitable \emph{normalized} version of the aforementioned submartingale sequence, with the normalization given by the reciprocal of its expected value times a correction factor.
So doing, and by means of Doob's maximal inequality to control such sequence uniformly on test points, we will decrease of a $\log\log x$ factor, compared to \cite{BA} and \cite{LTW}, the gap between the actual size of $V(x)$ and its uniform value $x/\sqrt{\log\log x}$ (whereas in previous works, $V(x)$ was put in relation with its expected size $x$ and the precision loss was indeed roughly $\log\log x$). This will lead to a last gain of roughly $\sqrt{\log\log x}$ in Theorem \ref{newthm}, thus overall reducing the upper bound from $(\log\log x)^{2+\eps}$ to $(\log\log x)^{1/4+\eps}$, in our case.

To recap, unlike the approach taken in \cite{BA} and \cite{LTW}, we are going to introduce three key tools, which, compared to the result obtained in \cite{BA} and \cite{LTW}, permit us to save:
\begin{itemize}
\item a $\log\log x$ factor, by improving the use of the high moments inequalities to study the distribution of the partial sums of $f$, having a single large prime factor to take out;
\item a $(\log\log x)^{1/4}$ factor, by inputting low moments estimates for the full partial sums of $f$ into our argument;
\item a final $\sqrt{\log\log x}$ factor, by analysing the partial sums of $f$ over a larger sample of points and simultaneously controlling them by associating a suitably normalized submartingale sequence.
\end{itemize}
\begin{rmk}
To prove \eqref{Adamresult}, one moves from the full partial sums of $f$ to sums over integers with a large prime factor. Unfortunately, it is not possible to go the other way round. In particular, Theorem \ref{newthm} does not lead to an almost sure upper bound for the full partial sums of $f$. Indeed, one should estimate also the complementary portion over $\sqrt{x}$-smooth numbers (i.e., numbers $n$ with $P(n)\leq \sqrt{x}$), which requires exploiting more the intricated dependence between the values of $f(n)$, that cannot now be straightforwardly removed with a conditioning argument. 
\end{rmk}
\section{Preliminary results}
\subsection{Probabilistic number theoretic results}
As crucial in Lau--Tenenbaum--Wu's paper and previous works, we will need a control on the $2m$-th moment of weighted sums of random multiplicative functions. The following lemma allows us to do so by shifting the problem to computing the $L_2$-norm of a sum of such weights. Because $m$ can be arbitrary, this explains the name of such a result (see e.g. Harper's paper \cite[Proof of Probability Result 1]{H4}).
\begin{lem}(Hypercontractive inequality).
\label{lemhyperineq}
Let $f$ be a Rademacher or Steinhaus random multiplicative function. For any sequence $(a_n)_{n=1}^{+\infty}$ of complex numbers and any positive integer $m\geq 1$, we have
\begin{align*}
\mathbb{E}\bigg[\bigg| \sum_{n\geq 1}a_n f(n)\bigg|^{2m}\bigg]\leq \bigg(\sum_{n\geq 1} |a_n|^2 d_{2m-1}(n)\bigg)^{m},
\end{align*}
where for any $m\geq 1$, $d_m(n)$ is the $m$-fold divisor function.
\end{lem}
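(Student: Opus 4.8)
The plan is to handle the Steinhaus and Rademacher cases separately, in each case via the hypercontractive estimate natural for the underlying randomness — the point being that $\mathbb{E}[|\sum_n a_n f(n)|^{2m}]$ is the $2m$-th power of the $L^{2m}$-norm of a polynomial in the independent random variables $\{f(p)\}_p$, and that $d_{2m-1}$ is exactly the weight such an estimate produces. By a routine truncation I would first reduce to $(a_n)$ finitely supported: applying the inequality to $(a_n\mathbf{1}_{n\le N})$ and letting $N\to\infty$ works because the right-hand side is monotone in $N$ and, once $\sum_n|a_n|^2 d_{2m-1}(n)<\infty$, the partial sums of $\sum_n a_n f(n)$ are Cauchy in $L^{2m}$ (their tails are controlled by the very inequality being proved). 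In the Rademacher case I may additionally assume $a_n=0$ whenever $n$ is not squarefree, since $f$ vanishes there and discarding those terms only shrinks the right-hand side.

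For the Steinhaus case I would argue by hand. Using complete multiplicativity, $S^m=\big(\sum_n a_n f(n)\big)^m=\sum_{N\ge1}c_N f(N)$ with $c_N=\sum_{n_1\cdots n_m=N}a_{n_1}\cdots a_{n_m}$; since the $f(p)$ are independent and uniform on the unit circle one checks $\mathbb{E}[f(N)\overline{f(M)}]=\mathbf{1}_{N=M}$, so $\mathbb{E}[|S|^{2m}]=\mathbb{E}[|S^m|^2]=\sum_N|c_N|^2$. By Cauchy--Schwarz $|c_N|^2\le d_m(N)\sum_{n_1\cdots n_m=N}|a_{n_1}\cdots a_{n_m}|^2$, and interchanging the order of summation gives
\begin{align*}
\mathbb{E}[|S|^{2m}]\le\sum_{n_1,\dots,n_m\ge1}d_m(n_1\cdots n_m)\,|a_{n_1}|^2\cdots|a_{n_m}|^2.
\end{align*}
Now $d_m(n_1\cdots n_m)\le\prod_{i=1}^m d_m(n_i)\le\prod_{i=1}^m d_{2m-1}(n_i)$, the first step being iterated submultiplicativity of $d_m$ (which at prime powers is the Vandermonde-type bound $\binom{a+b+m-1}{m-1}\le\binom{a+m-1}{m-1}\binom{b+m-1}{m-1}$) and the second the monotonicity $d_m\le d_{2m-1}$. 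Factoring the resulting sum over the $n_i$ yields the claim — in fact with the sharper weight $d_m(n)$ in this case.

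For the Rademacher case that final manoeuvre is unavailable: $f$ is not completely multiplicative, and organising $\mathbb{E}[|S|^{2m}]$ by the squarefree kernel of a product $n_1\cdots n_m$ founders because, for a fixed squarefree $k$, there are infinitely many $m$-tuples of squarefree integers whose product has kernel $k$, so no finite divisor count is available and the analogue of the Cauchy--Schwarz step genuinely diverges. Instead I would invoke the hypercontractive inequality of Bonami and Beckner (whence the name of the lemma). For finitely supported $(a_n)$ on squarefrees, $S=\sum_{I}a_{\prod_{p\in I}p}\prod_{p\in I}f(p)$ (sum over finite sets $I$ of primes) is a multilinear polynomial in the independent signs $\{f(p)\}$, with the Fourier coefficient $a_n$ sitting on the character of degree equal to the number $\omega(n)$ of distinct prime factors of $n$; writing $T_\rho$ for the associated noise operator, Bonami--Beckner gives $\|T_{1/\sqrt{2m-1}}g\|_{2m}\le\|g\|_2$ for every $g$. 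I would apply this to $g:=\sum_n a_n\,d_{2m-1}(n)^{1/2}f(n)$: since $d_{2m-1}(n)=(2m-1)^{\omega(n)}$ for squarefree $n$, one has $T_{1/\sqrt{2m-1}}g=S$, while orthonormality of $\{f(n)\}_{n\ \mathrm{squarefree}}$ gives $\|g\|_2^2=\sum_n|a_n|^2 d_{2m-1}(n)$. Hence
\begin{align*}
\mathbb{E}[|S|^{2m}]=\|S\|_{2m}^{2m}=\|T_{1/\sqrt{2m-1}}g\|_{2m}^{2m}\le\|g\|_2^{2m}=\bigg(\sum_n|a_n|^2 d_{2m-1}(n)\bigg)^{m}.
\end{align*}

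The hard part, and the reason the statement carries $d_{2m-1}$ rather than the sharper $d_m$, is precisely the Rademacher case: $2m-1$ is the sharp hypercontractive exponent $q-1$ at $q=2m$, and it is essential that Bonami--Beckner delivers the inequality with constant exactly $1$ and no loss, so there is no room to improve the exponent. Everything else is bookkeeping — checking the finite-support (and, for Rademacher, squarefree-support) reductions, and verifying the noise-operator identity $T_{1/\sqrt{2m-1}}g=S$ together with the norm computation $\|g\|_2^2=\sum_n|a_n|^2 d_{2m-1}(n)$. One could alternatively package the Steinhaus case as a one-sided (Hardy-space) hypercontractive inequality on the infinite torus, but the elementary route above is the most transparent.
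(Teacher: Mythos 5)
Correct. The paper does not prove this lemma but simply cites Harper \cite[Proof of Probability Result 1]{H4}, and your argument is essentially the standard one behind that citation — orthogonality of $f(N)$ plus Cauchy--Schwarz and submultiplicativity of $d_m$ in the Steinhaus case (indeed yielding the sharper weight $d_m(n)$), and Bonami's hypercontractive inequality with $\rho=1/\sqrt{2m-1}$ acting on $g=\sum_n a_n d_{2m-1}(n)^{1/2}f(n)$ in the Rademacher case (noting, as is standard, that the two-point Bonami inequality with constant $1$ extends to complex-valued coefficients).
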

To compute the resulting sums in Lemma \ref{lemhyperineq}, we will make use of the following standard bound on the partial sums of divisor functions.
\begin{lem}
\label{lemmapartialsumdiv}
Let $M>0$. Then, uniformly for positive integers $m\leq M$ and $x\geq 2$, one has
\begin{align*}
\sum_{n\leq x}d_m(n)\ll x(\log x)^{m-1}.
\end{align*}
\end{lem}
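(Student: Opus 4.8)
The plan is to prove Lemma~\ref{lemmapartialsumdiv} by a direct counting argument, using the combinatorial description of the $m$-fold divisor function: $d_m(n)$ is the number of ordered $m$-tuples of positive integers whose product equals $n$. Hence
\[
\sum_{n\leq x}d_m(n)=\#\{(n_1,\dots,n_m):\ n_i\geq 1,\ n_1\cdots n_m\leq x\}.
\]
First I would fix the coordinates $n_1,\dots,n_{m-1}$; the number of admissible values of the last coordinate $n_m$ is $\lfloor x/(n_1\cdots n_{m-1})\rfloor$ when $n_1\cdots n_{m-1}\leq x$, and $0$ otherwise. Summing over the remaining coordinates and bounding $\lfloor x/(n_1\cdots n_{m-1})\rfloor\leq x/(n_1\cdots n_{m-1})$ gives
\[
\sum_{n\leq x}d_m(n)\leq x\sum_{\substack{n_1,\dots,n_{m-1}\geq 1\\ n_1\cdots n_{m-1}\leq x}}\frac{1}{n_1\cdots n_{m-1}}\leq x\bigg(\sum_{j\leq x}\frac{1}{j}\bigg)^{m-1},
\]
where in the last step I simply discard the product constraint and let each $n_i$ range freely over $\{1,\dots,\lfloor x\rfloor\}$, which is legitimate since each term is positive and every tuple satisfying $n_1\cdots n_{m-1}\leq x$ has all its entries $\leq x$.

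It then remains to insert the elementary bound $\sum_{j\leq x}1/j\leq 1+\log x$ and to convert $1+\log x$ into a multiple of $\log x$: since $x\geq 2$ we have $1+\log x=(1+1/\log x)\log x\leq (1+1/\log 2)\log x$, whence
\[
\sum_{n\leq x}d_m(n)\leq x\,(1+1/\log 2)^{m-1}(\log x)^{m-1}\leq (1+1/\log 2)^{M}\,x(\log x)^{m-1}.
\]
Since $(1+1/\log 2)^{M}$ depends only on $M$, this is exactly the asserted uniform bound (the case $m=1$ being the trivial estimate $\sum_{n\leq x}1\leq x$).

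This lemma is essentially routine, and I do not expect a genuine obstacle; the only point that deserves a moment's care is the uniformity in $m$, which is guaranteed because the multiplicative loss $(1+1/\log 2)^{m-1}$ incurred at the final step is controlled by a constant depending solely on $M$, one ranging only over the finitely many integers $m\leq M$. As an alternative one could run an induction on $m$ via $d_{m+1}(n)=\sum_{d\mid n}d_m(n/d)$ together with $\sum_{d\leq x}1/d\ll\log x$; the base case is again the trivial bound for $m=1$, and combining the finitely many cases $m\leq M$ once more yields a constant depending only on $M$.
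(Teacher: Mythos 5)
Your argument is correct, and it is a genuinely different route from the paper: the paper disposes of this lemma in one line by citing the standard estimate in Tenenbaum's book (Ch.~III, Corollary 3.6), whereas you give a self-contained elementary proof. Your counting step is sound — $\sum_{n\leq x}d_m(n)$ is the number of $m$-tuples with product at most $x$, fixing the first $m-1$ coordinates gives at most $x/(n_1\cdots n_{m-1})$ choices for the last, and dropping the product constraint (legitimately, since all terms are positive and each $n_i\leq x$) leaves $x\bigl(\sum_{j\leq x}1/j\bigr)^{m-1}\leq (1+1/\log 2)^{m-1}x(\log x)^{m-1}$ for $x\geq 2$, with the constant controlled by $M$ exactly as required by the uniformity statement; the degenerate case $m=1$ is handled separately as you note. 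What the citation buys the paper is brevity and access to a sharper, more flexible statement (Tenenbaum's result gives asymptotics/upper bounds with explicit uniformity in $m$, and the same corollary is reused later in the paper to bound $\sum_{n\leq x}d_3(n)^3$, which your tuple-counting argument does not directly cover); what your proof buys is complete self-containedness with an explicit constant, which is more than sufficient for the lemma as stated. Your suggested alternative by induction on $m$ via $d_{m+1}(n)=\sum_{d\mid n}d_m(n/d)$ is equally valid and essentially equivalent in content.
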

\begin{proof}
This easily follows from \cite[Ch. III, Corollary 3.6]{T}.
\end{proof}
By proceeding similarly as in Harper \cite[Sect.
2.5]{H3} and previously as in Harper, Nikeghbali and Radziwi\l\l \cite[Sect. 2.2]{HNR}, we will smoothen certain partial sums of random multiplicative functions to replace them with an integral of a Dirichlet series corresponding to such functions. To this aim, we will need the following version of Parseval's identity for Dirichlet series.
\begin{lem}[Parseval's identity] 
\label{harmonicanalysisres}
Let $(a_n)_{n=1}^{+\infty}$ be any sequence of complex numbers and let $A(s):= \sum_{n=1}^{+\infty}\frac{a_n}{n^s}$ denote the
corresponding Dirichlet series and let also $\sigma_c$ denote its abscissa of convergence. Then for any $\sigma>\max\{0, \sigma_c\}$, we have
\begin{align*}
\int_{0}^{+\infty}\frac{|\sum_{n\leq x} a_n|^2}{x^{1+2\sigma}}dx=\frac{1}{2\pi }\int_{-\infty}^{+\infty}\bigg| \frac{A(\sigma+it)}{\sigma+it}\bigg|^2 dt.
\end{align*}
\end{lem}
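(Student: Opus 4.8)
The plan is to read the left-hand side as the squared $L^2$-norm of the summatory function $S(x):=\sum_{n\leq x}a_n$ against the measure $x^{-1-2\sigma}\,dx$ on $(0,+\infty)$, to read the right-hand side as the squared $L^2$-norm of a Fourier transform of $S$ after a logarithmic change of variables, and to connect the two by Plancherel's theorem (so that the lemma is just the Dirichlet-series incarnation of Parseval's identity for Fourier integrals). The only place where the hypothesis $\sigma>\max\{0,\sigma_c\}$ enters is through a crude polynomial bound for $S(x)$. Indeed, for every real $\beta>\sigma_c$ the series $\sum_n a_n n^{-\beta}$ converges, so $B_\beta(x):=\sum_{n\leq x}a_n n^{-\beta}=O_\beta(1)$; partial summation applied to $S(x)=\sum_{n\leq x}(a_n n^{-\beta})\,n^{\beta}$ then yields $S(x)\ll_\beta x^{\max\{\beta,0\}}$, and by taking $\beta$ arbitrarily close to $\sigma_c$ we obtain, for every $\delta>0$, the bound $S(x)\ll_\delta x^{\max\{0,\sigma_c\}+\delta}$ (in particular $S(x)=O(1)$ when $\sigma_c<0$). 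Since also $S(x)\equiv 0$ on $(0,1)$, it follows that for $\sigma>\max\{0,\sigma_c\}$ the function $x\mapsto |S(x)|^2 x^{-1-2\sigma}$ is integrable on $(0,+\infty)$, so that both sides of the claimed identity are then finite.

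Next I would record the Mellin representation $A(s)/s=\int_{0}^{+\infty}S(x)\,x^{-s-1}\,dx$, valid for $\Re(s)=\sigma>\max\{0,\sigma_c\}$. This is just Abel summation: one has
\[
\sum_{n\leq N}a_n n^{-s}=S(N)N^{-s}+s\int_{1}^{N}S(x)\,x^{-s-1}\,dx,
\]
and, letting $N\to+\infty$, the left-hand side tends to $A(s)$ while the polynomial bound above forces the boundary term $S(N)N^{-s}$ to tend to $0$ and the integral to converge absolutely. The substitution $x=e^{u}$ rewrites this as $A(\sigma+it)/(\sigma+it)=\int_{-\infty}^{+\infty}g(u)\,e^{-itu}\,du=:\widehat{g}(t)$, where $g(u):=e^{-\sigma u}S(e^{u})$ is supported on $[0,+\infty)$; the same polynomial bound shows $g\in L^{1}(\mathbb{R})\cap L^{2}(\mathbb{R})$, so that $\widehat{g}$ is simultaneously the continuous $L^1$-Fourier transform appearing here and the $L^2$-Fourier transform of $g$.

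Finally I would apply Plancherel's theorem in the normalization $\int_{\mathbb{R}}|g(u)|^{2}\,du=\frac{1}{2\pi}\int_{\mathbb{R}}|\widehat{g}(t)|^{2}\,dt$, which is the one consistent with the transform $\widehat{g}(t)=\int_{\mathbb{R}}g(u)e^{-itu}\,du$ (no constant in front). The right-hand side of this identity is exactly $\frac{1}{2\pi}\int_{-\infty}^{+\infty}|A(\sigma+it)/(\sigma+it)|^{2}\,dt$, while undoing the change of variables $x=e^{u}$ on the left gives $\int_{\mathbb{R}}|g(u)|^{2}\,du=\int_{\mathbb{R}}|S(e^{u})|^{2}e^{-2\sigma u}\,du=\int_{0}^{+\infty}|S(x)|^{2}x^{-1-2\sigma}\,dx$, which is the left-hand side of the lemma, as desired. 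The only point that requires a little care is the initial bookkeeping: deducing the growth bound $S(x)\ll_\delta x^{\max\{0,\sigma_c\}+\delta}$ from the definition of the abscissa of convergence (treating the cases $\sigma_c\geq 0$ and $\sigma_c<0$ separately) and checking that it places $g$ in $L^{1}\cap L^{2}$. Once this is in place, the Abel-summation identity and the appeal to Plancherel are entirely routine, and no genuine obstacle arises.
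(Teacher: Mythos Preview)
Your argument is correct and is the standard derivation of this Dirichlet-series Parseval identity: bound $S(x)$ polynomially via the abscissa of convergence, express $A(s)/s$ as the Mellin transform of $S$, substitute $x=e^{u}$ to turn it into the Fourier transform of $g(u)=e^{-\sigma u}S(e^{u})\in L^{1}\cap L^{2}$, and invoke Plancherel. The only point worth a second glance is the growth bound in the case $\sigma_c<0$, but there you can simply take $s=0$ in the defining series to see $S(x)=O(1)$ directly, as you note.

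The paper does not actually prove this lemma; it just cites \cite[Eq.~(5.26)]{MV} (Montgomery--Vaughan). Your write-up is essentially the proof one finds there, so there is no methodological difference to discuss---you have supplied the details the paper elected to reference.
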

\begin{proof}
This is \cite[Eq. $(5.26)$]{MV}.
\end{proof}
We will apply Lemma \ref{harmonicanalysisres} to the sequence given by a random multiplicative function. By multiplicativity,  its Dirichlet series can be recast in terms of an Euler product, for which we then need an $L^2$-estimate.
\begin{lem}[Euler product result]
\label{Eulerprodlem}
If $f$ is a Rademacher random multiplicative function, then for any real numbers $t$ and $2\leq  x \leq  y$, we have
\begin{align*}
\mathbb{E}\bigg[ \prod_{x<p\leq y}\bigg| 1+\frac{f(p)}{p^{1/2+it}}\bigg|^2\bigg]=\prod_{x<p\leq y}\bigg( 1+\frac{1}{p}\bigg).
\end{align*}
When $f$ is a Steinhaus random multiplicative function, we instead have
\begin{align*}
\mathbb{E}\bigg[ \prod_{x<p\leq y}\bigg| 1-\frac{f(p)}{p^{1/2+it}}\bigg|^{-2}\bigg]=\prod_{x<p\leq y}\bigg( 1-\frac{1}{p}\bigg)^{-1}.
\end{align*}
\end{lem}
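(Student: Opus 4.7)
The plan is to exploit the independence of $\{f(p)\}_p$ across distinct primes, which immediately factorises each expectation into a product of single-prime expectations, and then to compute each local factor explicitly by expanding either a squared modulus (Rademacher case) or a geometric series (Steinhaus case), and finally invoking the orthogonality relations satisfied by the underlying random variables. I do not expect any serious obstacle; the computation is a standard check that is essentially forced by the definitions.

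First, since the $f(p)$ are mutually independent, both left-hand sides factor as
\begin{equation*}
\prod_{x<p\leq y}\mathbb{E}\bigg[\bigg|1\pm\frac{f(p)}{p^{1/2+it}}\bigg|^{\pm 2}\bigg],
\end{equation*}
with the appropriate sign and exponent in each case, so it suffices to evaluate one Euler factor at a time.

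In the Rademacher case, I would write $|1+f(p)/p^{1/2+it}|^2 = (1+f(p)p^{-1/2-it})(1+f(p)p^{-1/2+it})$, expand to obtain
\begin{equation*}
1+\frac{f(p)^2}{p}+f(p)\bigl(p^{-1/2-it}+p^{-1/2+it}\bigr),
\end{equation*}
and then use the two Rademacher identities $f(p)^2=1$ (since $f(p)\in\{-1,+1\}$) and $\mathbb{E}[f(p)]=0$. The cross terms vanish and the single-prime expectation collapses to $1+1/p$, giving the claimed product.

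In the Steinhaus case, I would expand each factor as a geometric series in $f(p)$ and $\overline{f(p)}$, legitimate because $|f(p)|=1$ and $p>1$ make the series absolutely convergent:
\begin{equation*}
\bigg|1-\frac{f(p)}{p^{1/2+it}}\bigg|^{-2}=\sum_{j,k\geq 0}\frac{f(p)^{k}\overline{f(p)}^{j}}{p^{k(1/2+it)+j(1/2-it)}}.
\end{equation*}
Since $f(p)=e^{i\theta_p}$ with $\theta_p$ uniform on $[0,2\pi)$, one has $\mathbb{E}[f(p)^{k}\overline{f(p)}^{j}]=\mathbb{E}[e^{i(k-j)\theta_p}]$, which equals $1$ if $k=j$ and $0$ otherwise. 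Restricting to the diagonal and summing the resulting geometric series $\sum_{k\geq 0}p^{-k}=(1-1/p)^{-1}$ yields the single-prime value, and the product over $x<p\leq y$ gives the stated identity. Taking products over the finitely many primes in $(x,y]$ uses absolute convergence of each factor, so no interchange issues arise.
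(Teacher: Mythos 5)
Your proposal is correct and follows essentially the same route as the paper: factorise via independence across primes, expand the square (Rademacher) using $\mathbb{E}[f(p)]=0$ and $f(p)^2=1$, and expand the geometric series and use the orthogonality $\mathbb{E}[f(p)^k\overline{f(p)}^j]=\mathbf{1}_{k=j}$ (Steinhaus), with absolute convergence justifying the interchange of expectation and summation just as the paper's appeal to Tonelli--Fubini does.
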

\begin{proof}
Let $f$ be a Rademacher random multiplicative function. By the independence of the $f(p)$'s, for different prime numbers $p$, we get
\begin{align*}
\mathbb{E}\bigg[ \prod_{x<p\leq y}\bigg| 1+\frac{f(p)}{p^{1/2+it}}\bigg|^2\bigg]=\prod_{x<p\leq y}\mathbb{E}\bigg[\bigg| 1+\frac{f(p)}{p^{1/2+it}}\bigg|^2 \bigg].
\end{align*}
Expanding the square gives
\begin{align*}
\bigg| 1+\frac{f(p)}{p^{1/2+it}}\bigg|^2=1+\frac{f(p)}{p^{1/2+it}}+\frac{f(p)}{p^{1/2-it}}+\frac{1}{p}.
\end{align*}
Since $\mathbb{E}[f(p)]=0$, for any prime $p$, it is immediate to get the thesis.

Let now $f$ be a Steinhaus random multiplicative function and, for any prime number $p$, write
\begin{align*}
\bigg(1-\frac{f(p)}{p^{1/2+it}}\bigg)^{-1}=\sum_{\substack{k\geq 0}} \frac{f(p^k)}{p^{k(1/2+it)}}.
\end{align*}
Then, we clearly have
\begin{align*}
\mathbb{E}\bigg[\bigg|1-\frac{f(p)}{p^{1/2+it}}\bigg|^{-2} \bigg]=\mathbb{E}\bigg[\sum_{\substack{k\geq 0}} \frac{f(p^k)}{p^{k(1/2+it)}}\sum_{\substack{j\geq 0}} \frac{\overline{f(p^j)}}{p^{j(1/2-it)}} \bigg]=\sum_{\substack{k\geq 0}} \frac{1}{p^k},
\end{align*}
where we can exchange summations and expectation thanks to Tonelli--Fubini's theorem. By the independence of the $f(p)$'s, for different prime numbers $p$, we deduce the thesis also in this case.
\end{proof}
\subsection{Pure probabilistic results}
Common tools to tackle Law of the Iterated Logarithm type results, both classically and related to random multiplicative functions, are the well-known Borel--Cantelli's lemmas. Since the values of a random multiplicative function are not all independent one another, so do many of the events we will have to deal with; hence, we are interested only in applications of the first Borel--Cantelli's lemma (see e.g. \cite[Theorem 18.1]{G}).
\begin{lem}(The first Borel--Cantelli's lemma)
\label{borelcantelli}
Let $\{A_n\}_{n\geq 1}$ be any sequence of events. Then
\begin{align*}
\sum_{n=1}^{+\infty}\mathbb{P}(A_n)<+\infty \Rightarrow \mathbb{P}(\limsup_{n\rightarrow +\infty} A_n)=0,
\end{align*}
where
\begin{align*}
\limsup_{n\rightarrow +\infty} A_n:=\bigcap_{n=1}^{+\infty}\bigcup_{m=n}^{+\infty} A_m.
\end{align*}
\end{lem}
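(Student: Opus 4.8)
The plan is to deduce the conclusion from just two elementary properties of the probability measure $\mathbb{P}$: countable subadditivity and monotonicity, together with the fact that the tail of a convergent series of non‑negative terms tends to $0$. To set things up, I would introduce, for each $n\geq 1$, the event
\[
B_n:=\bigcup_{m=n}^{+\infty}A_m,
\]
so that by the very definition recalled in the statement, $\limsup_{n\to+\infty}A_n=\bigcap_{n=1}^{+\infty}B_n$. The sequence $(B_n)_{n\geq 1}$ is decreasing, $B_1\supseteq B_2\supseteq\cdots$, and in particular $\bigcap_{n\geq 1}B_n\subseteq B_N$ for every fixed $N\geq 1$.

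The first key step is to bound $\mathbb{P}(B_N)$ by the tail of the given series. By countable subadditivity,
\[
\mathbb{P}(B_N)=\mathbb{P}\Big(\bigcup_{m=N}^{+\infty}A_m\Big)\leq\sum_{m=N}^{+\infty}\mathbb{P}(A_m),
\]
and since by hypothesis $\sum_{m=1}^{+\infty}\mathbb{P}(A_m)<+\infty$, the right‑hand side is the tail of a convergent series of non‑negative numbers, hence $\sum_{m=N}^{+\infty}\mathbb{P}(A_m)\to 0$ as $N\to+\infty$.

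The second step is to combine this with monotonicity of $\mathbb{P}$: from $\bigcap_{n\geq 1}B_n\subseteq B_N$ we get
\[
\mathbb{P}\big(\limsup_{n\to+\infty}A_n\big)=\mathbb{P}\Big(\bigcap_{n=1}^{+\infty}B_n\Big)\leq\mathbb{P}(B_N)\leq\sum_{m=N}^{+\infty}\mathbb{P}(A_m)
\]
for every $N\geq 1$. Letting $N\to+\infty$ and using the previous step forces $\mathbb{P}(\limsup_{n\to+\infty}A_n)=0$, which is exactly the assertion. (Equivalently, one could invoke continuity of $\mathbb{P}$ from above along the decreasing sequence $(B_n)$ — legitimate since $\mathbb{P}(B_1)\leq\sum_m\mathbb{P}(A_m)<+\infty$ — to obtain $\mathbb{P}(\bigcap_n B_n)=\lim_n\mathbb{P}(B_n)=0$ in one stroke.)

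There is no genuine obstacle in this argument; it is a short and classical fact. The only point meriting a moment's care is that the bound on $\mathbb{P}(B_N)$ must genuinely go to $0$ as $N$ grows, and this is precisely what the convergence of the series $\sum_n\mathbb{P}(A_n)$ secures — which is also why the hypothesis cannot be relaxed to the mere condition $\mathbb{P}(A_n)\to 0$.
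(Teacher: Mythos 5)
Your proof is correct and complete: the chain of countable subadditivity, monotonicity of $\mathbb{P}$, and the vanishing of the tail of a convergent series is exactly the standard argument, and your parenthetical alternative via continuity from above is equally valid. The paper itself gives no proof of this lemma, merely citing Gut's textbook, and the cited proof is the same classical one you have written out.
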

The next result is the celebrated Hoeffding's inequality, which gives Gaussian-type tails for the probability that a sum of many bounded \emph{independent} random variables deviates from its mean value by more than a certain amount (see e.g. Hoeffding \cite[Theorem 2]{HO}).
\begin{lem}[Hoeffding's inequality]
\label{Hoeffdinglem}
Let $X_1,\dots, X_n$ be independent random variables bounded by the intervals $[a_i,b_i]$. Let $S_n=X_1+\cdots+X_n$. Then we have
\begin{align*}
\mathbb{P}(|S_n-\mathbb{E}[S_n]|\geq t)\leq 2\exp\bigg(-\frac{2t^2}{\sum_{i=1}^{n}(b_i-a_i)^2}\bigg).
\end{align*}
\end{lem}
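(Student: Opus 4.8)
The plan is to follow the classical exponential-moment (Chernoff) argument. First I would reduce to the centered case: replacing each $X_i$ by $X_i-\mathbb{E}[X_i]$ leaves $S_n-\mathbb{E}[S_n]$ unchanged and replaces the confining interval $[a_i,b_i]$ by one of the same length $b_i-a_i$, so it suffices to prove the one-sided bound $\mathbb{P}(S_n\geq t)\leq\exp(-2t^2/\sum_{i=1}^{n}(b_i-a_i)^2)$ under the additional assumption $\mathbb{E}[X_i]=0$ for all $i$; the two-sided bound with the factor $2$ then follows by applying this also to $-S_n=\sum_i(-X_i)$ (with $-X_i\in[-b_i,-a_i]$, again of length $b_i-a_i$) and adding the two probabilities.

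For the one-sided bound, fix $s>0$ and apply Markov's inequality to $e^{sS_n}$, using independence of the $X_i$ to factor the moment generating function:
$$\mathbb{P}(S_n\geq t)\leq e^{-st}\,\mathbb{E}[e^{sS_n}]=e^{-st}\prod_{i=1}^{n}\mathbb{E}[e^{sX_i}].$$
The heart of the matter is the pointwise estimate, sometimes called Hoeffding's lemma: if $X$ is centered with $X\in[a,b]$ almost surely, then $\mathbb{E}[e^{sX}]\leq e^{s^2(b-a)^2/8}$. I would prove this from convexity of $x\mapsto e^{sx}$ on $[a,b]$, which gives $e^{sx}\leq\frac{b-x}{b-a}e^{sa}+\frac{x-a}{b-a}e^{sb}$ for $x\in[a,b]$; taking expectations and using $\mathbb{E}[X]=0$ yields $\mathbb{E}[e^{sX}]\leq e^{\phi(u)}$, where $u:=s(b-a)$, $p:=-a/(b-a)\in[0,1]$ (note $a\leq 0\leq b$), and $\phi(u):=-pu+\log(1-p+pe^{u})$. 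A direct computation gives $\phi(0)=\phi'(0)=0$ and $\phi''(u)=\frac{p(1-p)e^{u}}{(1-p+pe^{u})^{2}}=z(1-z)$ with $z:=\frac{pe^{u}}{1-p+pe^{u}}\in[0,1]$, whence $\phi''(u)\leq\frac14$; Taylor's theorem with Lagrange remainder then gives $\phi(u)\leq u^2/8$, i.e. $\mathbb{E}[e^{sX}]\leq e^{s^2(b-a)^2/8}$.

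Substituting this bound into the product yields
$$\mathbb{P}(S_n\geq t)\leq\exp\!\bigg(-st+\frac{s^2}{8}\sum_{i=1}^{n}(b_i-a_i)^2\bigg),$$
and I would finish by optimizing the right-hand side over $s>0$: the quadratic $s\mapsto -st+\frac{s^2}{8}\sum_i(b_i-a_i)^2$ attains its minimum at $s=4t\big/\sum_{i=1}^{n}(b_i-a_i)^2$, where it equals $-2t^2/\sum_{i=1}^{n}(b_i-a_i)^2$. Combining this with the same estimate applied to $-S_n$ produces the stated two-sided inequality.

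The only nontrivial ingredient is Hoeffding's lemma, and within it the uniform bound $\phi''(u)\leq 1/4$; everything else (Markov's inequality, factoring over independent variables, and minimizing a one-variable quadratic) is routine. Of course one may also simply invoke Hoeffding \cite[Theorem 2]{HO} for the lemma, but the convexity argument above keeps the proof short and self-contained.
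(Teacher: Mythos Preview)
Your argument is correct and is precisely the classical Chernoff--Hoeffding proof: center, apply Markov to the exponential moment, factor by independence, bound each factor via Hoeffding's lemma (convexity plus the $\phi''\leq 1/4$ calculation), and optimize in $s$. Every step checks out, including the observation that centering forces $a\leq 0\leq b$ so that $p=-a/(b-a)\in[0,1]$.

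That said, the paper itself does not give any proof of this lemma: it simply records the statement and refers the reader to Hoeffding's original article \cite[Theorem~2]{HO}. So there is nothing to compare against---your write-up is a faithful, self-contained rendering of exactly the proof that the citation points to, and it would serve perfectly well if one wanted to include the details rather than outsource them.
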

The next lemma gives a strong uniform control on the supremum of a finite number of terms in a submartingale sequence (see e.g. \cite[Theorem 9.1]{G}).
\begin{lem}[Doob's maximal inequality]
\label{Doobmaxineq}
Let $\lambda>0$. Suppose that the sequence of random variables and of $\sigma$-algebras $\{(X_n,\mathcal{F}_n)\}_{n\geq 0}$ is a nonnegative submartingale. Then
\begin{align*}
\lambda\mathbb{P}(\max_{0\leq k\leq n}X_k>\lambda)\leq \mathbb{E}[X_n].
\end{align*}
\end{lem}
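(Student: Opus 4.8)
The statement is the classical Doob maximal inequality, and the plan is to prove it by the standard first-passage decomposition of the level set. Set $A:=\{\max_{0\leq k\leq n}X_k>\lambda\}$ and, for each $0\leq k\leq n$, let
$$A_k:=\{X_0\leq\lambda,\ \dots,\ X_{k-1}\leq\lambda,\ X_k>\lambda\}$$
be the event that $k$ is the first index at which the sequence exceeds $\lambda$. These events are pairwise disjoint and their union is exactly $A$; moreover $A_k\in\mathcal{F}_k$, since it is described purely in terms of $X_0,\dots,X_k$, all of which are $\mathcal{F}_k$-measurable by the adaptedness hypothesis.

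Before assembling the estimate I would record the only consequence of the submartingale hypothesis that is actually used, namely that $\mathbb{E}[X_n\mid\mathcal{F}_k]\geq X_k$ almost surely for every $0\leq k\leq n$. This follows by a short downward induction on $n-k$: the case $k=n$ is immediate, and for the inductive step the tower property gives $\mathbb{E}[X_n\mid\mathcal{F}_k]=\mathbb{E}[\mathbb{E}[X_n\mid\mathcal{F}_{k+1}]\mid\mathcal{F}_k]\geq\mathbb{E}[X_{k+1}\mid\mathcal{F}_k]\geq X_k$, where the last inequality is the one-step defining property of a submartingale and integrability throughout is guaranteed by $\mathbb{E}[|X_j|]<\infty$.

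Now on $A_k$ we have $X_k>\lambda$, hence $\lambda\mathbf{1}_{A_k}\leq X_k\mathbf{1}_{A_k}$ pointwise, and taking expectations yields $\lambda\mathbb{P}(A_k)\leq\mathbb{E}[X_k\mathbf{1}_{A_k}]$. Since $A_k\in\mathcal{F}_k$, we may dominate $X_k$ inside this expectation by $\mathbb{E}[X_n\mid\mathcal{F}_k]$ and then pull the indicator into the conditioning:
$$\mathbb{E}[X_k\mathbf{1}_{A_k}]\leq\mathbb{E}\big[\mathbb{E}[X_n\mid\mathcal{F}_k]\,\mathbf{1}_{A_k}\big]=\mathbb{E}[X_n\mathbf{1}_{A_k}].$$
Summing over $0\leq k\leq n$, using the disjointness of the $A_k$, and finally invoking the nonnegativity of $X_n$ to drop the last indicator gives
$$\lambda\mathbb{P}(A)=\sum_{k=0}^{n}\lambda\mathbb{P}(A_k)\leq\sum_{k=0}^{n}\mathbb{E}[X_n\mathbf{1}_{A_k}]=\mathbb{E}[X_n\mathbf{1}_A]\leq\mathbb{E}[X_n],$$
which is the asserted bound. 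There is no genuine obstacle here; the only points demanding attention are checking that each $A_k$ indeed lies in $\mathcal{F}_k$, so that the conditioning step is legitimate, and applying the submartingale inequality in the correct direction, i.e. that $X_k$ is dominated in conditional mean by the terminal variable $X_n$ and not the reverse.
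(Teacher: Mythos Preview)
Your argument is correct and is the standard first-passage (stopping time) proof of Doob's maximal inequality. The paper does not actually give a proof of this lemma; it is stated as a classical result with a reference to Gut \cite[Theorem 9.1]{G}, so there is nothing to compare beyond noting that your write-up supplies the textbook details the paper omits.
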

On the other hand, regarding the moments of such supremum, we have the following result (see e.g. \cite[Theorem 9.4]{G}).
\begin{lem}[Doob's $L^p$-inequality]
\label{DoobLpineq}
Let $p>1$. Suppose that the sequence of random variables and of $\sigma$-algebras $\{(X_n,\mathcal{F}_n)\}_{n\geq 0}$ is a nonnegative submartingale bounded in $L^p$. Then
\begin{align*}
\mathbb{E}[(\max_{0\leq k\leq n}X_k)^p]\leq \bigg(\frac{p}{p-1}\bigg)^p \max_{0\leq k\leq n}\mathbb{E}[X_k^p].
\end{align*}
\end{lem}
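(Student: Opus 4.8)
The plan is to run the classical argument that derives the $L^p$ maximal inequality from the weak-type bound already recorded in Lemma~\ref{Doobmaxineq}, using the ``layer-cake'' representation of the $p$-th moment together with H\"older's inequality. Throughout put $M_n:=\max_{0\le k\le n}X_k$; this is a nonnegative random variable, and since $M_n\le X_0+\cdots+X_n$ with each $X_k\in L^p$ by the boundedness hypothesis, it lies in $L^p$, so every expectation appearing below is finite.

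The first step is to sharpen Lemma~\ref{Doobmaxineq} into its localised form
\begin{align*}
\lambda\,\mathbb{P}(M_n\ge\lambda)\le\mathbb{E}\big[X_n\,\mathbf{1}_{\{M_n\ge\lambda\}}\big],\qquad \lambda>0.
\end{align*}
This comes from the same stopping-time idea underlying Lemma~\ref{Doobmaxineq}: let $\tau$ be the least $k\le n$ with $X_k\ge\lambda$, and set $\tau:=n$ if no such $k$ exists. On the event $A_\lambda:=\{M_n\ge\lambda\}$ one has $X_\tau\ge\lambda$ and $A_\lambda\in\mathcal{F}_\tau$; decomposing $\mathbb{E}[(X_n-X_\tau)\mathbf{1}_{A_\lambda}]$ over the values of $\tau$ and applying $\mathbb{E}[X_{k+1}\mid\mathcal{F}_k]\ge X_k$ repeatedly shows that this quantity is $\ge 0$, whence $\lambda\,\mathbb{P}(A_\lambda)\le\mathbb{E}[X_\tau\mathbf{1}_{A_\lambda}]\le\mathbb{E}[X_n\mathbf{1}_{A_\lambda}]$.

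Next, using the identity $y^p=\int_0^\infty p\lambda^{p-1}\mathbf{1}_{\{y\ge\lambda\}}\,d\lambda$ valid for $y\ge0$ and Tonelli's theorem to interchange the $\lambda$-integral with the expectation, one computes
\begin{align*}
\mathbb{E}[M_n^p]=\int_0^\infty p\lambda^{p-1}\mathbb{P}(M_n\ge\lambda)\,d\lambda\le\int_0^\infty p\lambda^{p-2}\,\mathbb{E}\big[X_n\,\mathbf{1}_{\{M_n\ge\lambda\}}\big]\,d\lambda=\frac{p}{p-1}\,\mathbb{E}\big[X_n M_n^{p-1}\big],
\end{align*}
where evaluating $\int_0^{M_n}p\lambda^{p-2}\,d\lambda=\frac{p}{p-1}M_n^{p-1}$ is exactly where the hypothesis $p>1$ is used. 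Applying H\"older's inequality with exponents $p$ and $p/(p-1)$ to the last expression gives $\mathbb{E}[X_nM_n^{p-1}]\le\mathbb{E}[X_n^p]^{1/p}\,\mathbb{E}[M_n^p]^{(p-1)/p}$, so that
\begin{align*}
\mathbb{E}[M_n^p]\le\frac{p}{p-1}\,\mathbb{E}[X_n^p]^{1/p}\,\mathbb{E}[M_n^p]^{(p-1)/p}.
\end{align*}
Since $\mathbb{E}[M_n^p]$ is finite (the case $\mathbb{E}[M_n^p]=0$ being trivial), dividing through by $\mathbb{E}[M_n^p]^{(p-1)/p}$ and raising to the $p$-th power yields $\mathbb{E}[M_n^p]\le(p/(p-1))^p\,\mathbb{E}[X_n^p]$. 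Finally, because $t\mapsto t^p$ is convex and non-decreasing on $[0,\infty)$, conditional Jensen shows $(X_k^p)_{k\ge0}$ is itself a submartingale, so $k\mapsto\mathbb{E}[X_k^p]$ is non-decreasing and $\mathbb{E}[X_n^p]=\max_{0\le k\le n}\mathbb{E}[X_k^p]$; substituting this into the bound gives the statement of Lemma~\ref{DoobLpineq}.

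The only genuinely delicate points are upgrading the weak maximal inequality to its localised form and securing the a priori finiteness of $\mathbb{E}[M_n^p]$ that legitimises the division step. Here the latter is automatic from $M_n\le X_0+\cdots+X_n$ and $L^p$-boundedness, but to make the argument robust one may instead run the whole computation with $M_n$ replaced by the bounded variable $M_n\wedge N$ (leaving the indicator $\mathbf{1}_{\{M_n\ge\lambda\}}$ intact, and noting $\mathbb{P}(M_n\wedge N\ge\lambda)=\mathbb{P}(M_n\ge\lambda)$ for $\lambda\le N$), and then let $N\to\infty$ by monotone convergence; everything else is the soft machinery of stopping times and H\"older's inequality.
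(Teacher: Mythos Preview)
Your proof is correct and is precisely the classical argument for Doob's $L^p$-inequality: localised weak-type bound via a stopping time, layer-cake representation, H\"older, and the observation that $\mathbb{E}[X_k^p]$ is non-decreasing in $k$. The paper itself does not supply a proof of this lemma at all; it simply cites \cite[Theorem 9.4]{G}. So there is nothing to compare beyond noting that you have written out in full what the paper treats as a black-box textbook reference.
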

\section{Proof of Theorem \ref{newthm}: setting up the argument}
Let $\eps>0$ and define
$$M_f(x):=\sum_{\substack{n\leq x\\ P(n)>\sqrt{x}}}f(n).$$
We would like to show that the event
$$\mathcal{A}:=\bigg\{|M_f(x)|> 6\sqrt{x}(\log\log x)^{1/4+\eps},\ \text{for infinitely many $x$}\bigg\},$$
holds with null probability.

As in Basquin \cite{BA} and in Lau--Tenenbaum--Wu \cite{LTW}, we are going to check the condition of the event $\mathcal{A}$ on a suitable sequence of test points $x_i$, not too much sparse to yet guarantee enough control on the size of $M_f(x)$ between two consecutive such points. As in the aforementioned works, we take $x_i:=\lfloor e^{i^{\eps}}\rfloor $. Moreover, again following previous arguments, we are going to focus our analysis on the test points contained in very wide intervals $[X_{\ell-1},X_\ell]$ so that $\mathcal{A}\subset\cup_{\ell\geq 1} \mathcal{A}_\ell$, where
\begin{align*}
\mathcal{A}_\ell:=\left\{ \sup_{X_{\ell-1}<x_{i-1}\leq X_\ell}\sup_{x_{i-1}<x\leq x_i} \frac{|M_f(x)|}{\sqrt{x}R(x)}>6\right\}
\end{align*}
and where, for the sake of readability, we let $R(x):=(\log\log x)^{1/4+\eps}$.
We here choose $X_\ell:=e^{2^{\ell^K}}$, with $K:=1/(4\eps)$. Unlike in \cite{BA} and \cite{LTW}, where $K$ was equal to $1$, we will work with an extremely sparser sequence $X_\ell$. We then note that
\begin{align*}
2^{(\ell-1)^K}<\log x_{i-1}\leq 2^{\ell^K}&\Rightarrow (\ell-1)^K \log 2<\log\log x_{i-1}\leq \ell^K \log 2\\
&\Rightarrow \log\log x_i \sim \ell^K \log 2,\ \text{as $\ell\rightarrow +\infty$},
\end{align*}
for any $x_{i-1}\in [X_{\ell-1}, X_\ell]$.

For any $x\in [x_{i-1}, x_i]$, we may write 
$$M_f(x)=M_f(x_{i-1})+(M_f(x)-M_f(x_{i-1})).$$
Hence, we get
\begin{align*}
|M_f(x)|\leq |M_f(x_{i-1})|+\bigg| \sum_{\substack{n\leq x_{i-1}\\ \sqrt{x_{i-1}}<P(n)\leq \sqrt{x}}}f(n)\bigg|+\bigg| \sum_{\substack{x_{i-1}<n\leq x\\ P(n)>\sqrt{x}}}f(n)\bigg|.
\end{align*}
Since the function $\sqrt{x}R(x)$ is an increasing function of $x$, we see that $\mathcal{A}_\ell\subset\mathcal{B}_\ell\cup \mathcal{C}_\ell\cup \mathcal{D}_\ell$, where
\begin{align*}
\mathcal{B}_\ell:&=\left\{ \sup_{X_{\ell-1}<x_i\leq X_\ell} \frac{|M_f(x_i)|}{\sqrt{x_i}R(x_i)}>2\right\}\\
\mathcal{C}_\ell:&=\left\{ \sup_{X_{\ell-1}<x_{i-1}\leq X_\ell}\frac{1}{\sqrt{x_{i-1}}R(x_{i-1})}\sup_{x_{i-1}<x\leq x_i}\bigg| \sum_{\substack{n\leq x_{i-1}\\ \sqrt{x_{i-1}}<P(n)\leq \sqrt{x}}}f(n)\bigg|>2\right\}\\
\mathcal{D}_\ell:&=\left\{ \sup_{X_{\ell-1}<x_{i-1}\leq X_\ell}\frac{1}{\sqrt{x_{i-1}}R(x_{i-1})}\sup_{x_{i-1}<x\leq x_i}\bigg| \sum_{\substack{x_{i-1}<n\leq x\\ P(n)>\sqrt{x}}}f(n)\bigg|>2\right\}.
\end{align*}
The event $\mathcal{B}_\ell$ encodes information about the size of $M_f(x)$ on test points. On the other hand, the events $\mathcal{C}_\ell$ and $\mathcal{D}_\ell$ together control the size of the increments of the partial sums of $f$ between two consecutive test points $x_{i-1},x_i$.

Now, suppose that we are given upper bounds $B_\ell, C_\ell$ and $D_\ell$ on the probabilities of $\mathcal{B}_\ell, \mathcal{C}_\ell$ and $\mathcal{D}_\ell$. If $\sum_{\ell\geq 1}(B_\ell+C_\ell+D_\ell)<+\infty$, so is $\sum_{\ell\geq 1}\mathbb{P}(\mathcal{A}_\ell)$. By the first Borel--Cantelli's lemma, Lemma \ref{borelcantelli}, we would then deduce that
$
\mathbb{P}(\limsup_{\ell\geq 1} \mathcal{A}_\ell )=0,
$
which, in turn, implies that, for any sufficiently large $x$, we would almost surely have $|M_f(x)|\leq 6\sqrt{x}(\log\log x)^{1/4+\eps}$, which is the content of Theorem \ref{newthm}.
\section{The sum between test points}
The aim in this section is to find a bound summable on $\ell$ for the probability of the events $\mathcal{C}_\ell$ and $\mathcal{D}_\ell$. As explained in \cite{LTW}, this would show that, almost surely, the partial sum $M_f(x)$ fluctuates moderately in appropriate short intervals and that the problem of bounding $M_f(x)$ everywhere may be reduced to doing so at the suitable test points $x_i$.
\subsection{The probability of $\mathcal{C}_\ell$}
By the union bound, the probability of $\mathcal{C}_\ell$ is
\begin{align*}
&\leq \sum_{X_{\ell-1}<x_{i-1}\leq X_\ell}\mathbb{P}\bigg(\sup_{x_{i-1}<x\leq x_i}\bigg| \sum_{\substack{n\leq x_{i-1}\\ \sqrt{x_{i-1}}<P(n)\leq \sqrt{x}}}f(n)\bigg|>2\sqrt{x_{i-1}}R(x_{i-1})\bigg)\\
&=\sum_{X_{\ell-1}<x_{i-1}\leq X_\ell}\mathbb{P}\bigg(\sup_{x_{i-1}<x\leq x_i}\bigg| \sum_{\substack{n\leq x_{i-1}\\ \sqrt{x_{i-1}}<P(n)\leq \sqrt{x}}}f(n)\bigg|^2>4x_{i-1}R(x_{i-1})^2\bigg).
\end{align*}
By Markov's inequality for the power $2$, the above is
\begin{align*}
\ll \sum_{X_{\ell-1}<x_{i-1}\leq X_\ell}\frac{1}{x_{i-1}^2R(x_{i-1})^4}\mathbb{E}\bigg[ \bigg(\sup_{x_{i-1}<x\leq x_i}\bigg|\sum_{\substack{n\leq x_{i-1}\\ \sqrt{x_{i-1}}<P(n)\leq \sqrt{x}}}f(n)\bigg|^2\bigg)^2 \bigg].
\end{align*}
Now, consider the sequence of random variables $(Z_k)_{k\geq 1}$ given by
\begin{align*}
Z_{k}:=\bigg| \sum_{\substack{n\leq x_{i-1}\\ \sqrt{x_{i-1}}<P(n)\leq \lfloor \sqrt{k}\rfloor}}f(n)\bigg|^2.
\end{align*}
To move from one element of the sequence $Z_k$ to the next, we reveal at most one new prime at a time. This usually corresponds to having a submartingale structure. In fact, $(Z_k)_{k\geq 1}$ does form a nonnegative submartingale with respect to the filtration $\mathcal{F}_k:=\sigma(\{f(p): p\leq \lfloor \sqrt{k}\rfloor\})$. Indeed, $Z_k$ is clearly $\mathcal{F}_k$-adapted and $L^1$-bounded and  furthermore
\begin{align*}
\mathbb{E}[Z_{k+1}|\mathcal{F}_k]&=Z_k+\mathbb{E}\bigg[\bigg|\sum_{\substack{n\leq x_{i-1}\\ \lfloor \sqrt{k}\rfloor <P(n)\leq \lfloor \sqrt{k+1}\rfloor}}f(n)\bigg|^2\bigg|\mathcal{F}_k\bigg]\\
&+2\Re\bigg(\sum_{\substack{n\leq x_{i-1}\\ \sqrt{x_{i-1}}<P(n)\leq \lfloor \sqrt{k}\rfloor}}\overline{f(n)}\mathbb{E}\bigg[\sum_{\substack{n\leq x_{i-1}\\ \lfloor \sqrt{k}\rfloor <P(n)\leq \lfloor \sqrt{k+1}\rfloor}}f(n)\bigg|\mathcal{F}_k\bigg]\bigg)\\
&\geq Z_k,
\end{align*}
because for any $n$ in the innermost sum on the second line above we have $f(n)=f(p)f(m)$, with $\lfloor \sqrt{k}\rfloor <p\leq \lfloor \sqrt{k+1}\rfloor $ and $m$ divided only by primes smaller than $\lfloor \sqrt{k}\rfloor$, so that $\mathbb{E}[f(n)|\mathcal{F}_k]=f(m)\mathbb{E}[f(p)]=0.$  

Whence, an application of Doob's $L^2$-inequality, Lemma \ref{DoobLpineq}, leads to a bound for the probability of $\mathcal{C}_\ell$
\begin{align*}
\ll \sum_{X_{\ell-1}<x_{i-1}\leq X_\ell}\frac{1}{x_{i-1}^2R(x_{i-1})^4}\sup_{x_{i-1}<x\leq x_i}\mathbb{E}\bigg[\bigg|\sum_{\substack{n\leq x_{i-1}\\ \sqrt{x_{i-1}}<P(n)\leq \sqrt{x}}}f(n)\bigg|^4 \bigg].
\end{align*}
To compute the fourth moment we appeal to Lemma \ref{lemhyperineq}, which gives a bound
\begin{align*}
&\leq \bigg(\sum_{\substack{n\leq x_{i-1}\\ \sqrt{x_{i-1}}<P(n)\leq \sqrt{x_i}}}d_3(n)\bigg)^2.
\end{align*}
Finally, we write
\begin{align*}
\sum_{\substack{n\leq x_{i-1}\\ \sqrt{x_{i-1}}<P(n)\leq \sqrt{x_i}}}d_3(n)=3\sum_{\sqrt{x_{i-1}}<p\leq \sqrt{x_i}}\sum_{k\leq x_{i-1}/p}d_3(k)
\end{align*}
and estimate the divisor sum on the right-hand side of the previous displayed equation by using Lemma \ref{lemmapartialsumdiv}. In this way, we get an overall bound for $\mathbb{P}(\mathcal{C}_\ell)$
\begin{align*}
&\ll \sum_{X_{\ell-1}<x_{i-1}\leq X_\ell}\frac{1}{x_{i-1}^2R(x_{i-1})^4}x_{i-1}^2(\log x_{i-1})^4\bigg(\sum_{\sqrt{x_{i-1}}<p\leq \sqrt{x_i}}\frac{1}{p}\bigg)^2\\
&\ll \sum_{X_{\ell-1}<x_{i-1}\leq X_\ell}\frac{(\log x_{i-1})^4}{i^2R(x_{i-1})^4}\ll \sum_{i\geq 2^{\frac{(\ell-1)^K}{\eps}}}\frac1{i^{2-4\eps}}\ll 2^{-(\ell-1)^K (1-4\eps)/\eps},
\end{align*}
by a strong form of Mertens' theorem (with error term given by the Prime Number Theorem), if $\ell$ is sufficiently large with respect to $\eps$. This is certainly summable on $\ell$, if $\eps<1/4.$
\subsection{The probability of $\mathcal{D}_\ell$} By the union bound 
\begin{align*}
\mathbb{P}(\mathcal{D}_l)\leq \sum_{X_{\ell-1}<x_{i-1}\leq X_\ell}\mathbb{P}\bigg(\sup_{x_{i-1}<x\leq x_i}\bigg| \sum_{\substack{x_{i-1}<n\leq x\\ P(n)>\sqrt{x}}}f(n)\bigg|>2\sqrt{x_{i-1}}R(x_{i-1})\bigg).
\end{align*}
The probability of the above event where instead the partial sum of $f(n)$ runs over the full short interval $[x_{i-1},x]$ has already been studied by Basquin \cite{BA} and Lau--Tenenbaum--Wu \cite{LTW}. Here, we have to deal with the extra condition on the largest prime factor, which can still be handled by adapting the proof in the aforementioned papers.

We split $[x_{i-1}, x_i]$ into a disjoint union
of at most $2\log x_i$ subintervals with limit points
\begin{align*}
u_k:=x_{i-1}+\sum_{1\leq j\leq k}2^{\nu_j}\ \ \ (0\leq k\leq h),
\end{align*}
with $\nu_1>\nu_2>\cdots>\nu_h$ positive integers.

Then, by seeing 
$$S_f(x):= \sum_{\substack{x_{i-1}<n\leq x\\ P(n)>\sqrt{x}}}f(n)=S_f(x)-S_f(x_{i-1}),$$
we can bound the above probability with
\begin{align*}
&\leq \mathbb{P}\bigg(\sum_{u_k}|S_f(u_{k+1})-S_f(u_k)|>2\sqrt{x_{i-1}}R(x_{i-1})\bigg)\\
&\leq \mathbb{P}\bigg(\bigcup_{u_k}\bigg\{|S_f(u_{k+1})-S_f(u_k)|>\frac{\sqrt{x_{i-1}}R(x_{i-1})}{\log x_i}\bigg\}\bigg)\\
&=\mathbb{P}\bigg(\sup_{u_k} |S_f(u_{k+1})-S_f(u_k)|>\frac{\sqrt{x_{i-1}}R(x_{i-1})}{\log x_i}\bigg).
\end{align*} 
Moreover, note that for any $x_{i-1}\leq u\leq v\leq x_i$, we have
\begin{align*}
S_f(v)-S_f(u)=-\sum_{\substack{x_{i-1}<n\leq u\\ \sqrt{u}<P(n)\leq\sqrt{v}}}f(n)+\sum_{\substack{u<n\leq v\\ P(n)>\sqrt{v}}}f(n).
\end{align*}
Now, write $u=x_{i-1}+(l-1)2^m$ and $v=x_{i-1}+l2^m$, where $l:=\sum_{1\leq j\leq k}2^{\nu_j-\nu_k}\geq 1$ and $m:=\nu_k\geq 0$ are such that $l2^m\leq x_i-x_{i-1}$. 

By the union bound, Markov's inequality for the fourth moment and the hypercontractive inequality as stated in Lemma \ref{lemhyperineq}, we have a bound for the probability of $\mathcal{D}_\ell$
\begin{align}
\label{boundDl1}
&\ll \sum_{X_{\ell-1}<x_{i-1}\leq X_\ell}\frac{(\log x_i)^4}{x_{i-1}^2}\sum_{\substack{l\geq 1, m\geq 0\\ l2^m\leq x_i-x_{i-1}}}\bigg(\sum_{u<n\leq v} d_3(n)\bigg)^2\\
\label{boundDl2}
&+\sum_{X_{\ell-1}<x_{i-1}\leq X_\ell}\frac{(\log x_i)^4}{x_{i-1}^2}\sum_{\substack{l\geq 1, m\geq 0\\ l2^m\leq x_i-x_{i-1}}}\bigg(\sum_{\substack{x_{i-1}<n\leq u\\ \sqrt{u}<P(n)\leq\sqrt{v}}} d_3(n)\bigg)^2.
\end{align}
By H\"older's inequality, we get
\begin{align*}
\bigg(\sum_{u<n\leq v} d_3(n)\bigg)^2&\leq \bigg(\sum_{n\leq v} d_3(n)^3\bigg)^{2/3}\bigg(\sum_{u<n\leq v}1\bigg)^{4/3}\\
&\ll x_i^{2/3}(\log x_i)^{52/3}(v-u)^{4/3},
\end{align*}
where the estimate for the partial sum of $d_3(n)^3$ easily follows from \cite[Ch. III, Corollary 3.6]{T}.  Summing this up over all possible realizations of $u$ and $v$, we get an overall bound for \eqref{boundDl1} of
\begin{align}
\label{boundDl11}
&\ll \sum_{X_{\ell-1}<x_{i-1}\leq X_\ell}(\log x_i)^{64/3} \bigg(\frac{x_i-x_{i-1}}{x_i}\bigg)^{4/3}\\
&\ll \sum_{X_{\ell-1}<x_{i-1}\leq X_\ell}\frac{1}{i^{4/3-68\eps/3}}\nonumber\\
&\ll 2^{-(\ell-1)^K(1/3-68\eps/3)/\eps}.\nonumber
\end{align}
Regarding \eqref{boundDl2}, we notice that 
\begin{align}
\label{doublesum}
\sum_{\substack{x_{i-1}<n\leq u\\ \sqrt{u}<P(n)\leq\sqrt{v}}} d_3(n)=3 \sum_{\sqrt{u}<p\leq \sqrt{v}}\sum_{\frac{x_{i-1}}{p}<k\leq \frac{u}{p}}d_3(k).
\end{align}
If $\sqrt{v}-\sqrt{u}\geq 1$, we simply upper bound the innermost sum on the right-hand side above with $\ll u(\log u)^2/p$, by Lemma \ref{lemmapartialsumdiv}, and get a bound for \eqref{doublesum} of
\begin{align*}
\ll u(\log u)^2\sum_{\sqrt{u}<p\leq \sqrt{v}}\frac{1}{p}&\ll \sqrt{u}(\log u)^2(\sqrt{v}-\sqrt{u})\\
&\leq (v-u)(\log x_i)^2.
\end{align*}
This contibutes to \eqref{boundDl2} an amount of
\begin{align}
\label{estimate1}
&\ll\sum_{X_{\ell-1}<x_{i-1}\leq X_\ell}\frac{(\log x_i)^8}{x_{i-1}^2}\sum_{\substack{l\geq 1, m\geq 0\\ l2^m\leq x_i-x_{i-1}}}(v-u)^2\\
&\ll \sum_{X_{\ell-1}<x_{i-1}\leq X_\ell}\frac{(\log x_i)^8(x_i-x_{i-1})^2}{x_{i-1}^2}\nonumber\\
&\ll \sum_{X_{\ell-1}<x_{i-1}\leq X_\ell}\frac{1}{i^{2-10\eps}}\nonumber\\
&\ll 2^{-(\ell-1)^K(1-10\eps)/\eps}.\nonumber
\end{align}
On the other hand, if $\sqrt{v}-\sqrt{u}<1$, we extend the innermost sum on the right-hand side of \eqref{doublesum} to all the integers in the interval $[x_{i-1}/p, x_i/p]$ to then, by Shiu's theorem \cite[Theorem 1]{SH}, upper bound it with 
$$\frac{(x_i-x_{i-1})(\log x_i)^2}{p}\leq \frac{(x_i-x_{i-1})(\log x_i)^2}{\sqrt{x_{i-1}}}.$$
The application of Shiu's theorem is justified by the fact that
\begin{align*}
\frac{x_i}{p}-\frac{x_{i-1}}{p}>\sqrt[3]{\frac{x_{i}}{p}},
\end{align*}
if $x_i$ is sufficiently large with respect to $\eps$, as it can be easily verified.
This bound contributes to \eqref{boundDl2} an amount of
\begin{align}
\label{estimate2}
&\ll\sum_{X_{\ell-1}<x_{i-1}\leq X_\ell} \frac{(x_i-x_{i-1})^3(\log x_i)^8}{x_{i-1}^3}\\
&\ll\sum_{X_{\ell-1}<x_{i-1}\leq X_\ell} \frac{1}{i^{3-11\eps}}\nonumber\\
&\ll 2^{-(\ell-1)^K(2-11\eps)/\eps}.\nonumber
\end{align}
Together, the estimates \eqref{boundDl11}, \eqref{estimate1} and \eqref{estimate2} give a total bound for the probability of $\mathcal{D}_\ell$ that is summable on $\ell$, if $\eps$ is small enough.
\section{The sum on test points and conditional conclusion of the proof of Theorem \ref{newthm}}
Thanks to the work done in the previous sections, to prove Theorem \ref{newthm}, we are left with understanding the size of the partial sums of $f$ over the test points $x_i$. More specifically, we need to bound the probability of the following event
\begin{align*}
\mathcal{B}_\ell:&=\left\{ \sup_{X_{\ell-1}<x_i\leq X_\ell} \frac{|M_f(x_i)|}{\sqrt{x_i}R(x_i)}>2\right\}.
\end{align*}
Assume $f$ to be a Rademacher random multiplicative function. To this aim, we first notice that we may rewrite the partial sums of $f$ over integers with a large prime factor as a sum of many independent random variables, if we allow for conditioning on the smaller primes. In fact, 
$$M_f(x_i)=\sum_{\sqrt{x_i}<p\leq x_i} Y_p,$$
where, for any $p>\sqrt{x_i}$, we let
\begin{align*}
Y_p:=f(p)\sum_{m\leq x_i/p}f(m).
\end{align*}
The random variables $(Y_p)_{\sqrt{x_i}<p\leq x_i}$,
conditioned on 
$$\mathcal{F}(\sqrt{x_i}):=\sigma(\{f(p):\ p\leq \sqrt{x_i}\}),$$ are independent, with $\mathbb{E}[Y_p | \mathcal{F}(\sqrt{x_i})]=0.$

We are then in position to apply Hoeffding's inequality, Lemma \ref{Hoeffdinglem}, to get
\begin{align}
\label{Hoeffdingineq}
\mathbb{P}\bigg(|M_f(x_i)|\geq 2\sqrt{x_i}R(x_i)\ | \mathcal{F}(\sqrt{x_i})\bigg)&\ll\exp\bigg(-\frac{4 x_i R(x_i)^2}{V(x_i)}\bigg),
\end{align}
where
\begin{equation}
V(x_i):=\sum_{\sqrt{x_i}<p\leq x_i} \bigg| \sum_{m\leq x_i/p}f(m)\bigg|^2.
\end{equation}
We arrive to the same bound \eqref{Hoeffdingineq}, where the constants $2$ and $4$ are replaced by $1$, if, for a Steinhaus random multiplicative function, we replace $M_f(x)$ with $\Re(M_f(x))$ and $\Im(M_f(x))$. 

Clearly, the right-hand side of \eqref{Hoeffdingineq} is still a random variable. However, if we condition on the size of $V(x_i)$, it will lead to an estimate for the probability of $\mathcal{B}_\ell$. To this regard, we will show that with high probability (depending on $\ell$)
\begin{align}
\label{heuristicvariance}
V(x_i)\ll \frac{x_i}{\sqrt{\log\log x_i}},
\end{align}
uniformly on $x_i\in [X_{\ell-1}, X_\ell]$. The scaling factor $\sqrt{\log\log x_i}$, compared to $\mathbb{E}[V(x_i)]\asymp x_i$, is characteristic of the low moments of partial sums of Rademacher and Steinhaus random multiplicative functions (see the introduction to \cite{H3}) and we can already pointwise derive \eqref{heuristicvariance} using Harper's low moments results \cite{H3}. The uniformity in \eqref{heuristicvariance} will come from rewriting $V(x_i)$ in terms of a submartingale sequence and managing its size via Doob's inequality. These features, together with the Gaussian-type control \eqref{Hoeffdingineq} on the tail distribution of $M_f(x_i)$, are what determines the exponent $1/4$ in Theorem \ref{newthm}.

Following previous considerations, we define
$$\mathcal{E}_\ell:=\left\{\sup_{X_{\ell-1}<x_i\leq X_\ell}  \frac{V(x_i)\sqrt{\log\log x_i}}{x_i}\leq T\right\},$$
with $T\geq 1$ a parameter that will be chosen later and that measures how much we can lose, compared to \eqref{heuristicvariance}, to still be able to successfully estimate the probability of $\mathcal{B}_\ell$.
We now show how to deduce Theorem \ref{newthm} from the next lemma. 
\begin{lem}
\label{lemS1}
Let $\eps>0$ and $K=1/(4\eps)$. Then, for any $T\geq 1$, we have
$$\mathbb{P}(\bar{\mathcal{E}_\ell}) \ll_\eps \bigg(\frac{\sqrt{\ell^K}}{T}\bigg)^{\frac{4}{\eps}}2^{-\frac{\ell^K}{\eps}}+\frac{1}{T^{1/4}}.$$
\end{lem}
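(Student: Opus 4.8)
The plan is to bound $\mathbb{P}(\bar{\mathcal{E}_\ell})$ by controlling $V(x_i)$ uniformly over the (now very numerous) test points in $[X_{\ell-1},X_\ell]$ by relating it to a smoothed $L^2$-integral of a truncated Euler product, which forms a submartingale, and then applying Doob's inequalities to handle the supremum over $x_i$, while using Harper's low moments estimates to get the correct $x/\sqrt{\log\log x}$ size. First I would recall that, by the definition of $V(x_i)$ and a dyadic/smoothing manoeuvre as in Harper \cite[Sect. 2.5]{H3} and Harper--Nikeghbali--Radziwi\l\l \cite[Sect. 2.2]{HNR}, one can replace the sharp-cutoff sum $\sum_{\sqrt{x_i}<p\leq x_i}|\sum_{m\leq x_i/p}f(m)|^2$ by an integral of the form $\int |\sum_{m\leq t}f(m)|^2 t^{-1-2\sigma}\,dt$ against a suitable kernel, and then invoke Parseval's identity (Lemma \ref{harmonicanalysisres}) to rewrite this as $\frac{1}{2\pi}\int_{-\infty}^{\infty}|F(\sigma+it)/(\sigma+it)|^2\,dt$, where $F$ is the Dirichlet series of $f$ truncated to primes up to $\sqrt{x_i}$. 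The key structural fact (used exactly as in \cite{H3}) is that, as $x_i$ increases through the test points, the relevant truncated Euler product — essentially $\prod_{p\le \sqrt{x_i}}(1+f(p)p^{-1/2-it})$ for Rademacher, or $(1-f(p)p^{-1/2-it})^{-1}$ for Steinhaus — together with the $\sigma$-algebras $\mathcal{F}(\sqrt{x_i})$, generates a nonnegative submartingale, because revealing $f$ at new primes only multiplies in independent mean-one-in-modulus-squared factors (Lemma \ref{Eulerprodlem}).

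Next I would split $\bar{\mathcal{E}_\ell}$ according to whether the excess of $V(x_i)$ over $x_i/\sqrt{\log\log x_i}$ comes from an "arithmetic/combinatorial" part that can be bounded in $L^2$ (or a low moment) or from the random Euler product getting large. The first term $\big(\sqrt{\ell^K}/T\big)^{4/\eps}2^{-\ell^K/\eps}$ in the statement has the shape of a union bound over the $\asymp 2^{\ell^K/\eps}$-many test points $x_i \in [X_{\ell-1},X_\ell]$ (recall $x_i=\lfloor e^{i^\eps}\rfloor$ and $\log x_i \asymp 2^{\ell^K}$, so $i \asymp 2^{\ell^K/\eps}$), each contributing a probability that, via a fourth-moment (i.e. $2m=4$) Markov estimate and the hypercontractive inequality (Lemma \ref{lemhyperineq}) plus the divisor-sum bound (Lemma \ref{lemmapartialsumdiv}), is of order $(\ell^K)^{2}/T^{4}$ raised to a suitable power — the exponent $4/\eps$ presumably arising from pushing the moment high enough (or iterating) to beat the number of points; the factor $\sqrt{\log\log x_i}\asymp\sqrt{\ell^K}$ enters because it is the normalization in the definition of $\mathcal{E}_\ell$. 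The second term $T^{-1/4}$ is the genuinely low-moment contribution: after normalizing the submartingale by the reciprocal of its expectation times a correction factor (so that its expected value is $\asymp 1/\sqrt{\log\log x_i}$, matching \eqref{heuristicvariance}), Doob's maximal inequality (Lemma \ref{Doobmaxineq}) applied to this normalized nonnegative submartingale gives $\mathbb{P}(\sup_{x_i} (\text{normalized } V) > T) \ll \mathbb{E}[\text{last term}]/T$, and the $1/4$ power reflects that one only has a fractional (low) moment estimate available from \cite{H3} rather than a full first-moment bound without loss — concretely, Harper's result gives $\mathbb{E}[|\sum_{m\le t}f(m)|] \ll \sqrt{t}/(\log\log t)^{1/4}$, and feeding this through the submartingale/Doob machinery produces exactly the $T^{-1/4}$.

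In more detail on the order of steps: (1) reduce $V(x_i)$ to the Euler-product integral and set up the submartingale $(W_k,\mathcal{F}_k)$ indexed so that the test points are among the times; (2) define the normalized sequence $\widetilde W_k := W_k / (\mathbb{E}[W_k]\cdot c_k)$ for an appropriate deterministic correction $c_k$ chosen so that $\sup_k \widetilde W_k$ dominates $\sup_{x_i} V(x_i)\sqrt{\log\log x_i}/x_i$ up to constants while keeping $\mathbb{E}[\widetilde W_k]$ of size $\asymp (\log\log x_i)^{-1/4}$ via Harper's low moments bound — this is where \cite{H3} is the essential input; (3) bound the "Euler product large" event by Doob's maximal inequality, yielding the $T^{-1/4}$; (4) bound the residual discretization/arithmetic error — the difference between $V(x_i)$ and its smoothed Euler-product proxy, and the contribution of the off-diagonal or large-$|t|$ part of the Parseval integral — by a high-moment (fourth moment) union bound over all $\asymp 2^{\ell^K/\eps}$ test points, giving the first displayed term; (5) add the two contributions. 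The main obstacle, I expect, is step (2)–(3): arranging the normalization so that the submartingale property is genuinely preserved after dividing by a varying deterministic quantity, and so that Doob's inequality outputs precisely the low-moment-governed size $(\log\log x_i)^{-1/4}$ uniformly in $x_i$ — i.e. correctly importing Harper's pointwise low moments bound into a maximal (uniform-in-$x_i$) statement without losing more than a constant, which is exactly the innovation the introduction advertises as the source of the $\sqrt{\log\log x}$ saving. The high-moment union bound in step (4), by contrast, should be a routine (if lengthy) computation with Lemmas \ref{lemhyperineq} and \ref{lemmapartialsumdiv}, exactly analogous to the estimates already carried out for $\mathcal{C}_\ell$ and $\mathcal{D}_\ell$.
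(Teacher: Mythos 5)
Your overall scaffolding (smoothing $V(x_i)$, Parseval, the Euler-product submartingale normalized by its expectation times a correction factor, Doob for uniformity in $x_i$, Harper's low moments, plus a high-moment union bound for the smoothing error) matches the paper, but the mechanism you propose for the crucial $T^{-1/4}$ term is not the paper's and, as described, would fail. You suggest choosing the deterministic normalization so that the normalized submartingale has expectation $\asymp(\log\log x_i)^{-1/4}$ (or $1/\sqrt{\log\log x_i}$) ``via Harper's low moments bound'', and then applying Doob's maximal inequality. This cannot work: by Lemma \ref{Eulerprodlem} and Mertens, the \emph{first} moment of $\int_{-\infty}^{+\infty}|\mathcal{S}_{x_i}(1/2+it)|^2|1/2+it|^{-2}dt$ is genuinely $\asymp\log x_i$, with no $\log\log$ saving -- the saving exists only for fractional moments -- and dividing by a deterministic sequence cannot import it into the expectation. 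Doob's inequality (Lemma \ref{Doobmaxineq}) only sees the first moment of the last term, so a direct application gives a bound of order $\sqrt{\ell^K}/T$, which with $T=\eps^2\ell^8$ and $K=1/(4\eps)$ is not summable in $\ell$ for small $\eps$; nor can you apply Doob to the $1/2$-power of the sequence, since $x\mapsto x^{1/2}$ is concave and the square root of a submartingale need not be a submartingale. The paper's missing idea is a \emph{conditioning}: one introduces the event $\Sigma_\ell$ that the Euler-product integral at the initial point $X_{\ell-1}$ is at most $\sqrt{T}2^{(\ell-1)^K}/\sqrt{(\ell-1)^K}$; Markov's inequality at exponent $1/2$ combined with Harper's bound $\mathbb{E}[(\int|\mathcal{S}_{X_{\ell-1}}|^2|1/2+it|^{-2}dt)^{1/2}]\ll(\log X_{\ell-1}/\sqrt{\log\log X_{\ell-1}})^{1/2}$ gives $\mathbb{P}(\bar{\Sigma}_\ell)\ll T^{-1/4}$ -- \emph{this} is where the $T^{-1/4}$ comes from. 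Doob is then applied only conditionally on $\Sigma_\ell$, and $\mathbb{E}[Y_{X_\ell}\mid\Sigma_\ell]$ is evaluated by pulling out the independent Euler factors at primes in $(X_{\ell-1},X_\ell]$ (Lemma \ref{Eulerprodlem}) and using the bound imposed by $\Sigma_\ell$ itself, yielding the harmless $1/\sqrt{T}$.

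A secondary point: the first term $(\sqrt{\ell^K}/T)^{4/\eps}2^{-\ell^K/\eps}$ does come, as you guessed, from the discrepancy between $V(x_i)$ and its smoothed proxy, but a fourth-moment bound is nowhere near enough to beat the $\asymp 2^{\ell^K/\eps}$ test points; the paper takes Markov at exponent $q=4/\eps$, and the per-point gain $(\log x_i)^{-q}$ is extracted from the shortness of the intervals $(x_i/t,x_i/p]$ only after choosing the smoothing parameter $X$ to be a large power of $\log x_i$ and combining Minkowski, H\"older, Cauchy--Schwarz and the hypercontractive inequality (Lemma \ref{lemhyperineq}). So this step is genuinely quantitative in $q$ and $X$, not just ``routine as for $\mathcal{C}_\ell$ and $\mathcal{D}_\ell$''; your sketch leaves both choices unspecified.
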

\begin{proof}[Conclusion of the proof of Theorem \ref{newthm}, given Lemma \ref{lemS1}]
Let $f$ be a Rademacher random multiplicative function. Plainly, 
\begin{align*}
\mathbb{P}(\mathcal{B}_\ell)&=\mathbb{P}\bigg(\bigcup_{X_{\ell-1}<x_i\leq X_\ell}\bigg\{\frac{|M_f(x_i)|}{\sqrt{x_i}R(x_i)}>2\bigg\}\bigg)\\
&\leq\mathbb{P}\bigg(\bigcup_{X_{\ell-1}<x_i\leq X_\ell}\bigg\{\frac{|M_f(x_i)|}{\sqrt{x_i}R(x_i)}>2\bigg\}\cap\bigg\{\frac{V(x_i)\sqrt{\log\log x_i}}{x_i}\leq T\bigg\} \bigg)\\
&+\mathbb{P}\bigg(\bigcup_{X_{\ell-1}<x_i\leq X_\ell}\bigg\{\frac{|M_f(x_i)|}{\sqrt{x_i}R(x_i)}>2\bigg\}\cap\bigg\{\frac{V(x_i)\sqrt{\log\log x_i}}{x_i}> T\bigg\} \bigg).
\end{align*}
By a repeated application of \eqref{Hoeffdingineq}, where we condition on the event $\{V(x_i)\sqrt{\log\log x_i}\leq Tx_i\}$, and the union bound, we get the above is
\begin{align*}
\ll \sum_{X_{\ell-1}<x_i\leq X_\ell}\exp\bigg(-\frac{4R(x_i)^2\sqrt{\log\log x_i}}{T}\bigg)+\mathbb{P}(\bar{\mathcal{E}_\ell}). 
\end{align*}
We arrive to the same above bound, by replacing $M_f(x)$ with $\Re(M_f(x))$ and $\Im(M_f(x))$ and the constants $2$ and $4$ with $1$, in the Steinhaus case.
 
Remind that $R(x_i)=(\log \log x_i)^{1/4+\eps}\sim \ell^{K/4+K\eps}(\log 2)^{1/4+\eps}$, with $K\eps=4$. Whence, by letting $T=T(\ell):=\eps^2 \ell^8$, the above becomes
\begin{align*}
\ll \sum_{X_{\ell-1}<x_i\leq X_\ell}\exp\bigg(-\frac{c\ell^K}{\eps^2}\bigg)+\mathbb{P}(\bar{\mathcal{E}_\ell}),
\end{align*} 
where $c>0$ is a constant. By Lemma \ref{lemS1}, we overall deduce that
\begin{align*}
\mathbb{P}(\mathcal{B}_\ell)\ll_\eps 2^{\frac{\ell^K}{\eps}}e^{-\frac{c\ell^K}{\eps^2}}+2^{-\frac{\ell^K}{\eps}}\ell^{8/\eps^2-32/\eps}+\ell^{-2},
\end{align*}
which is evidently a bound summable on $\ell$, if $\eps$ is taken small enough. Since the same holds for the probabilities of $\mathcal{C}_\ell$ and $\mathcal{D}_\ell$, as proved in subsect. $4.1$ and $4.2$, we overall get a summable bound for the probability of $\mathcal{A}_\ell$, thus concluding the proof of Theorem \ref{newthm}, in the way it was described at the end of sect. $3$.
\end{proof}
\begin{rmk}
We would like to stress how important is the introduction of the exponent $K$ in the definition of $X_\ell$. Even though it makes our task harder, by drastically increasing the number of test points $x_i$ contained between two consecutive elements of the sequence $\{X_\ell\}_{\ell \geq 1}$, it allows us to choose $T$ much \emph{smaller} compared to what is possible to do in Basquin \cite{BA} and Lau--Tenenbaum--Wu \cite{LTW}, leading to a superior bound in Theorem \ref{newthm}. Another source of saving in the choice of $T$ comes from inputting the new information about the low moments of the partial sums of $f$. 
\end{rmk}
\section{A smoothing argument}
In this section we start the proof of Lemma \ref{lemS1}. From now on, $f$ will indicate both a Rademacher and a Steinhaus random multiplicative function. To begin with, we make $V(x_i)$ more amenable to perform the computation of the probability of $\bar{\mathcal{E}_\ell}$. 

By taking inspiration from Harper's work \cite{H4} and previously from Harper, Nikeghbali and Radziwi\l\l s' work \cite{HNR}, we insert a logarithmic weight into the summation defining $V(x_i)$ to then smoothen the summation further by inserting an integral average. More specifically, we have
\begin{align*}
V(x_i)&\leq \frac{2}{\log x_i}\sum_{\sqrt{x_i}<p\leq x_i}\log p\bigg|\sum_{m\leq x_i/p}f(m)\bigg|^2\\
&=\frac{2X}{\log x_i}\sum_{\sqrt{x_i}<p\leq x_i}\frac{\log p}{p}\int_{p}^{p(1+1/X)}\bigg|\sum_{m\leq x_i/p}f(m)\bigg|^2dt\\
&\ll \frac{X}{\log x_i}\sum_{\sqrt{x_i}<p\leq x_i}\frac{\log p}{p}\int_{p}^{p(1+1/X)}\bigg|\sum_{\substack{m\leq x_i/t}}f(m)\bigg|^2dt\\
&+\frac{X}{\log x_i}\sum_{\sqrt{x_i}<p\leq x_i}\frac{\log p}{p}\int_{p}^{p(1+1/X)}\bigg|\sum_{x_i/t<m\leq x_i/p}f(m)\bigg|^2dt,
\end{align*}
by using $|a+b|^2\ll |a|^2+|b|^2,$ for any complex numbers $a$ and $b$, and where $X\geq 1$ will be chosen later.
Hence, the probability of $\bar{\mathcal{E}}_\ell$ may be bounded from above by $\mathbb{P}_1+\mathbb{P}_2$, where
\begin{align}
\label{smoothsum1}
\mathbb{P}_1:&=\mathbb{P}\bigg(\bigg\{\sup_{X_{\ell-1}<x_i\leq X_\ell}  \frac{X\sqrt{\log\log x_i}}{x_i\log x_i}\\
&\times \sum_{\sqrt{x_i}<p\leq x_i}\frac{\log p}{p}\int_{p}^{p(1+1/X)}\bigg|\sum_{\substack{m\leq x_i/t}}f(m)\bigg|^2dt> \frac{T}{2}\bigg\}\bigg)\nonumber
\end{align}
\begin{align}
\label{smoothsum2}
\mathbb{P}_2:&=\mathbb{P}\bigg(\bigg\{\sup_{X_{\ell-1}<x_i\leq X_\ell}  \frac{X\sqrt{\log\log x_i}}{x_i\log x_i}\\
&\times\sum_{\sqrt{x_i}<p\leq x_i}\frac{\log p}{p}\int_{p}^{p(1+1/X)}\bigg|\sum_{x_i/t<m\leq x_i/p}f(m)\bigg|^2dt> \frac{T}{2}\bigg\}\bigg).\nonumber
\end{align}
By the union bound and Markov's inequality for the power $q>1$, we have that
\begin{align}
\label{expectationsum1}
\mathbb{P}_2&\ll_q \frac{1}{T^q}\sum_{X_{\ell-1}<x_i\leq X_\ell} \bigg(\frac{\sqrt{\log\log x_i}}{x_i}\bigg)^q\\
&\times \mathbb{E}\bigg[ \bigg(\sum_{\sqrt{x_i}<p\leq x_i}\frac{X}{p}\int_{p}^{p(1+1/X)}\bigg|\sum_{x_i/t<m\leq x_i/p}f(m)\bigg|^2dt\bigg)^q\bigg].\nonumber
\end{align}
We fix $q:=4/\eps$, because we would like to roughly have $(\log x_i)^q$ of size comparable to the number of test points $x_i$ in $[X_{\ell-1},X_\ell]$.

The expectation above can be seen as the $q$th power of the $q$th norm of a sum of random variables. Then, it is natural to swap norm and summation, by appealing to Minkowski's inequality. We can thus bound such expectation with 
\begin{align*}
\leq \bigg(\sum_{\sqrt{x_i}<p\leq x_i}\bigg(\mathbb{E}\bigg[\bigg(\frac{X}{p}\int_{p}^{p(1+1/X)}\bigg|\sum_{x_i/t<m\leq x_i/p}f(m)\bigg|^2dt\bigg)^q \bigg] \bigg)^{\frac{1}{q}} \bigg)^{q}.
\end{align*}
The next step, arguing as in Harper \cite{H4}, is to switch the expectation with the integral. This is achieved by an application of H\"{o}lder's inequality to the normalised integral $\frac{X}{p}\int_p^{p(1+1/X)}dt$ with parameters $1/q$ and $(q-1)/q$. We then estimate the above with
\begin{align}
\label{firstboundexp}
\leq \bigg(\sum_{\sqrt{x_i}<p\leq x_i}\bigg(\frac{X}{p}\int_{p}^{p(1+1/X)}\mathbb{E}\bigg[\bigg|\sum_{x_i/t<m\leq x_i/p}f(m)\bigg|^{2q} \bigg]dt \bigg)^{\frac{1}{q}} \bigg)^{q}.
\end{align}
The problem is then reduced to bound the $2q$th moment of a partial sum of $f$ over short intervals. This is addressed by an application of the hypercontractive inequality. Indeed, arguing as in Harper \cite{H4}, we notice that if $x_i/(X+1)<p\leq x_i$, then 
$$\frac{X}{p}\int_{p}^{p(1+1/X)}\mathbb{E}\bigg[\bigg|\sum_{x_i/t<m\leq x_i/p}f(m)\bigg|^{2q} \bigg]dt \leq 1,$$
since the sum contains at most one element, having length $\frac{x_i(t-p)}{tp}<1$; otherwise, by again following Harper as in the proof of Proposition 2 in \cite{H4}, we apply the Cauchy--Schwarz's inequality to bound the expectation in \eqref{firstboundexp} with  
\begin{align*}
&\sqrt{\mathbb{E}\bigg[\bigg|\sum_{x_i/t<m\leq x_i/p}f(m)\bigg|^{2} \bigg]\mathbb{E}\bigg[\bigg|\sum_{x_i/t<m\leq x_i/p}f(m)\bigg|^{2(2q-1)}\bigg]}.
\end{align*}
Now, since $t<p(1+1/X)$, we clearly have
\begin{align*}
\mathbb{E}\bigg[\bigg|\sum_{x_i/t<m\leq x_i/p}f(m)\bigg|^{2} \bigg]\leq \sum_{x_i/(p(1+1/X))<m\leq x_i/p}1\ll \frac{x_i}{pX},
\end{align*}
where we used that $p\leq x_i/(X+1)$.
 
On the other hand, by Lemma \ref{lemhyperineq}, we find
\begin{align*}
\mathbb{E}\bigg[\bigg|\sum_{x_i/t<m\leq x_i/p}f(m)\bigg|^{2(2q-1)} \bigg]&\leq \bigg( \sum_{m\leq x_i/p}d_{4q-3}(m)\bigg)^{2q-1}\\
&\ll_q \bigg(\frac{x_i}{p}(\log x_i)^{4q-4}\bigg)^{2q-1},
\end{align*}
by Lemma \ref{lemmapartialsumdiv}.

Collecting the previous computations together, we have found 
\begin{align*}
\mathbb{E}\bigg[\bigg|\sum_{x_i/t<m\leq x_i/p}f(m)\bigg|^{2q}\bigg] \ll_q\bigg(\frac{x_i}{p}\bigg)^q\frac{(\log x_i)^{4q^2-6q+2}}{\sqrt{X}}.
\end{align*}
Hence, \eqref{firstboundexp} is
\begin{align*}
&\ll_q \bigg(\sum_{x_i/(X+1)<p\leq x_i}1+\sum_{\sqrt{x_i}<p\leq x_i/(X+1)}\frac{x_i}{pX^{1/2q}}(\log x_i)^{4q-6+2/q}\bigg)^q\\
&\ll_q \frac{x_i^q}{(\log x_i)^q},
\end{align*}
by choosing e.g. $X:=(\log x_i)^{8q^2-10q+4}$ and using estimates of Chebyshev and Mertens.
Inserting this back into \eqref{expectationsum1}, we deduce:
\begin{align*}
\mathbb{P}_2\ll_q \frac{1}{T^q}\sum_{X_{\ell-1}<x_i\leq X_\ell} \bigg(\frac{\sqrt{\log\log x_i}}{\log x_i}\bigg)^q&\ll_q \frac{2^{\frac{\ell^K}{\eps}}}{T^q} \bigg(\frac{\sqrt{\ell^K}}{2^{(\ell-1)^K}}\bigg)^q\\
&\leq \bigg(\frac{\sqrt{\ell^K}}{T}\bigg)^{\frac{4}{\eps}}2^{-\frac{\ell^K}{\eps}},
\end{align*}
reminding that $q=4/\eps$ and taking $\ell$ large enough with respect to $\eps$. This gives the first term in the upper bound of Lemma \ref{lemS1}.
\section{Inputting low moments estimates}
In this section we continue the proof of Lemma \ref{lemS1}, by now turning to the study of the probability in \eqref{smoothsum1}.
\subsection{Introducing a submartingale sequence}
Swapping integral and summation, we have
\begin{align*}
&X\sum_{\sqrt{x_i}<p\leq x_i}\frac{\log p}{p}\int_{p}^{p(1+1/X)}\bigg|\sum_{\substack{m\leq x_i/t}}f(m)\bigg|^2dt\\
&\leq X\int_{\sqrt{x_i}}^{x_i(1+1/X)}\sum_{t/(1+1/X)<p\leq t}\frac{\log p}{p}\bigg|\sum_{m\leq x_i/t}f(m)\bigg|^2dt.
\end{align*}
Since $\log t\asymp \log x_i$, and reminding that $X=(\log x_i)^{8q^2-10q+4}$, by a strong form of Mertens' theorem (with error term given by the Prime Number Theorem) we find 
$$\sum_{t/(1+1/X)<p\leq t}\frac{\log p}{p}\ll \log\bigg(1+\frac{1}{X}\bigg)\ll \frac{1}{X},$$
if $x_i$ is sufficiently large with respect to $\eps$.

Inserting the last estimate in the previous expression, and changing variables $x_i/t=:z$ inside the integral, we find it is
\begin{align}
\label{intermediateint}
\ll x_i\int_{0}^{\sqrt{x_i}}\bigg|\sum_{\substack{m\leq z}}f(m)\bigg|^2\frac{dz}{z^2}.
\end{align}
It will be soon clear that the above random variable generates a nonnegative submartingale sequence. This observation will help us out later to deal with a supremum of such sequence over the test points $x_i$, via the use of Doob's maximal inequality. However, an immediate application of such result would only lead to a too weak bound for $\mathbb{P}_1$. This is due to the fact that Doob's maximal inequality relates the probability of a supremum of a submartingale sequence only to the expectations of its members, not instead to their low moments (which we need here, because of the presence of the factors $\sqrt{\log\log x_i}$ in \eqref{smoothsum1}, which are related to the size of the low moments of the random variables in \eqref{intermediateint}). For similar reasons, even an application of Doob's $L^p$ inequality, Lemma \ref{DoobLpineq}, would be inefficient, considering that it only deals with high moments.
To overcome this, we will first condition on the event that the contribution from the values of $f$ on the small primes is dominated by the size of its low moments, and what follows goes in the direction of rewriting the integral in \eqref{intermediateint} in a way to make more accessible this kind of information.

By extending the integral in \eqref{intermediateint}, we find it is
\begin{align*}
=x_i\int_{0}^{\sqrt{x_i}}\bigg|\sum_{\substack{m\leq z\\ P(m)\leq x_i}}f(m)\bigg|^2\frac{dz}{z^2}\leq x_i \int_{0}^{+\infty}\bigg|\sum_{\substack{m\leq z\\ P(m)\leq x_i}}f(m)\bigg|^2\frac{dz}{z^{2}}.
\end{align*}
The idea of inserting the constraint on the largest prime factor is taken from the proof of \cite[Proposition 2]{H3}.
Continuing arguing as in there, by appealing to Parseval's identity, Lemma \ref{harmonicanalysisres}, we rewrite the above as
\begin{align*}
\frac{x_i}{2\pi}\int_{-\infty}^{+\infty} \bigg|\frac{\mathcal{S}_{x_i}(1/2+it)}{1/2+it}\bigg|^2dt,
\end{align*}
where
\begin{align*}
\mathcal{S}_{x_i}(1/2+it):=\prod_{p\leq x_i}\bigg(1+\frac{f(p)}{p^{1/2+it}}\bigg),
\end{align*}
in the Rademacher case, or
\begin{align*}
\mathcal{S}_{x_i}(1/2+it):=\prod_{p\leq x_i}\bigg(1-\frac{f(p)}{p^{1/2+it}}\bigg)^{-1},
\end{align*}
in the Steinhaus case. We would like to stress that this maneuver, to pass from an $L^2$-integral of the partial sums of $f$ to an $L^2$-integral of a product of independent random variables, is taken from Harper \cite[Proof of Proposition 2]{H3}. It differentiates from what was done in Lau--Tenenbaum--Wu \cite{LTW} in the fact that they arrived at a similar point, but kept working with the $L^2$-integral of partial sums of $f$, since such procedure would have not led them to a stronger result.

In conclusion, we may find
\begin{align}
\label{rewritingsmoothprob}
\mathbb{P}_1&\leq\mathbb{P}\bigg(\bigg\{\sup_{X_{\ell-1}<x_i\leq X_\ell}  \frac{\sqrt{\log\log x_i}}{\log x_i}\bigg(\frac{\log x_i}{\log X_{\ell-1}} \bigg)^{1/(\ell-1)^K}\\
&\times \int_{-\infty}^{+\infty}\bigg|\frac{\mathcal{S}_{x_i}(1/2+it)}{1/2+it}\bigg|^2dt>cT\bigg\}\bigg),\nonumber
\end{align}
for a certain $c>0$.

As it will be clear in a moment, the factors\footnote{Without them only the integral alone would have given rise to a submartingale sequence and a direct application of Doob's maximal inequality to handle the supremum in \eqref{rewritingsmoothprob} would have only led to an extremely large upper bound for $\mathbb{P}_1$.} $(\frac{\log x_i}{\log X_{\ell-1}})^{1/(\ell-1)^K}\geq 1$ have been introduced to make the sequence of random variables 
$$Y_{x_i}:=\frac{1}{\log x_i}\bigg(\frac{\log x_i}{\log X_{\ell-1}} \bigg)^{1/(\ell-1)^K}\int_{-\infty}^{+\infty}\bigg|\frac{\mathcal{S}_{x_i}(1/2+it)}{1/2+it}\bigg|^2dt$$ 
a submartingale sequence with respect to the filtration $\mathcal{F}_{i}:=\sigma(\{f(p):\ p\leq x_i\}).$ In fact, each $Y_{x_i}$ is certainly $\mathcal{F}_i$-measurable and $L^1$-bounded, since
\begin{align*}
\mathbb{E}[|\mathcal{S}_{x_i}(1/2+it)|^2]\ll \log x_i,
\end{align*}
by Lemma \ref{Eulerprodlem} and Mertens' theorem. Finally, we clearly have
\begin{align*}
\mathbb{E}[Y_{x_i}|\mathcal{F}_{i-1}]&=\frac{1}{\log x_{i-1}}\bigg(\frac{\log x_{i-1}}{\log X_{\ell-1}} \bigg)^{1/(\ell-1)^K}\frac{\log x_{i-1}}{\log x_i}\bigg(\frac{\log x_i}{\log x_{i-1}} \bigg)^{1/(\ell-1)^K}\\
&\times \int_{-\infty}^{+\infty} \frac{|\mathcal{S}_{x_{i-1}}(1/2+it)|^2}{|1/2+it|^2}\mathbb{E}\bigg[\frac{|\mathcal{S}_{x_i}(1/2+it)|^2}{|\mathcal{S}_{x_{i-1}}(1/2+it)|^2}\bigg]dt.
\end{align*}
By Lemma \ref{Eulerprodlem}, the expectation inside the integral equals
\begin{align*}
\exp\bigg(\sum_{x_{i-1}<p\leq x_i} \frac{1}{p}+O\bigg( \frac{1}{x_{i-1}}\bigg)\bigg)=\exp\bigg( \frac{\eps}{i}+O\bigg(\frac{1}{i^2}\bigg)\bigg),
\end{align*}
by a strong form of Mertens' theorem (with error term given by the Prime Number Theorem), if $i$ is sufficiently large with respect to $\eps$. On the other hand,
\begin{align*}
\frac{\log x_{i-1}}{\log x_i}=\bigg( 1-\frac{1}{i}\bigg)^{\eps}=\exp\bigg( -\frac{\eps}{i}+O\bigg(\frac{1}{i^2}\bigg)\bigg)
\end{align*}
and
\begin{align*}
\bigg( \frac{\log x_{i}}{\log x_{i-1}}\bigg)^{1/(\ell-1)^K}&\geq \bigg( \frac{\log x_{i}}{\log x_{i-1}}\bigg)^{\log 2/\log\log x_i}\\
&=\exp\bigg(\frac{\log 2}{i\log i}+O_\eps\bigg(\frac{1}{i^2\log i}\bigg) \bigg).
\end{align*}
We deduce that
\begin{align*}
\mathbb{E}[Y_{x_i}|\mathcal{F}_{i-1}]&\geq \frac{1}{\log x_{i-1}}\bigg(\frac{\log x_{i-1}}{\log X_{\ell-1}} \bigg)^{1/(\ell-1)^K}\int_{-\infty}^{+\infty} \frac{|\mathcal{S}_{x_{i-1}}(1/2+it)|^2}{|1/2+it|^2}dt\\
&=Y_{x_{i-1}},
\end{align*}
if $i$ is sufficiently large with respect to $\eps$. 
\subsection{Conditioning and low moments estimates}
We can now see \eqref{rewritingsmoothprob} as the probability that the supremum of a normalized submartingale sequence is large. This is the field where Doob's maximal inequality operates. However, as preannounced before, an immediate application of Lemma \ref{Doobmaxineq} turns out to be inefficient and to improve it we first need to introduce a conditioning on the values of $f$ at the small primes.
More specifically, we will condition on the following event:
\begin{align}
\label{eventlowmoments}
\Sigma_\ell:=\bigg\{ \int_{-\infty}^{+\infty} \frac{|\mathcal{S}_{X_{\ell-1}}(1/2+it)|^2}{|1/2+it|^2}dt\leq \frac{\sqrt{T}2^{(\ell-1)^K}}{\sqrt{(\ell-1)^K}}\bigg\}.
\end{align}
First of all, we need to check that $\Sigma_\ell$ holds with a probability sufficiently close to $1$. In fact, this is the more delicate part of our argument because we need access to deep information about the distribution of the Euler product of a random multiplicative function. By Markov's inequality for the power $1/2$, we get
\begin{align*}
\mathbb{P}(\overline{\Sigma_\ell})&\leq \bigg(\frac{\sqrt{(\ell-1)^K}}{\sqrt{T}2^{(\ell-1)^K}}\bigg)^{1/2}\mathbb{E}\bigg[\bigg(\int_{-\infty}^{+\infty} \frac{|\mathcal{S}_{X_{\ell-1}}(1/2+it)|^2}{|1/2+it|^2}dt\bigg)^{1/2}\bigg]\\
&\ll\frac{1}{T^{1/4}},
\end{align*}
whenever $\ell$ is sufficiently large with respect to $\eps$, which is good enough for Lemma \ref{lemS1}. Here, we have used Harper's low moment result, which gives 
\begin{align*}
\mathbb{E}\bigg[\bigg(\int_{-\infty}^{+\infty} \frac{|\mathcal{S}_{X_{\ell-1}}(1/2+it)|^2}{|1/2+it|^2}dt\bigg)^{1/2}\bigg]&\ll \bigg(\frac{\log X_{\ell-1}}{\sqrt{\log\log X_{\ell-1}}}\bigg)^{1/2}\\
&\ll\frac{2^{\frac{(\ell-1)^K}{2}}}{(\ell-1)^{K/4}}.
\end{align*}
This follows from \cite[Key Proposition 1]{H3} and \cite[Key Proposition 2]{H3}, as it is done in \cite{H3} in the paragraph entitled: ``Proof of the upper bound in Theorem 1, assuming Key Propositions 1 and 2''.

Now, by conditioning on the event $\Sigma_\ell$, we get that \eqref{rewritingsmoothprob} is at most
\begin{align}
\label{splittinglowmomentsprob}
\mathbb{P}\bigg(\bigg\{\sup_{X_{\ell-1}<x_i\leq X_\ell}  Y_{x_i}> \frac{cT}{\sqrt{\ell^K}}\bigg\}\bigg| \Sigma_\ell\bigg)+\frac{1}{T^{1/4}}.
\end{align}
By Doob's maximal inequality, Lemma \ref{Doobmaxineq}, the probability in \eqref{splittinglowmomentsprob} is
\begin{align*}
&\ll\frac{\sqrt{\ell^K}}{T}\mathbb{E}[Y_{X_{\ell}}|\Sigma_\ell]\\
&\leq \frac{\sqrt{\ell^K}}{T\log X_\ell}\bigg(\frac{2^{\ell^K}}{2^{(\ell-1)^K}}\bigg)^{1/(\ell-1)^K} \mathbb{E}\bigg[ \int_{-\infty}^{+\infty} \frac{|\mathcal{S}_{X_{\ell}}(1/2+it)|^2}{|1/2+it|^2}dt\bigg| \Sigma_\ell\bigg].
\end{align*}
Here, by abuse of notation, we indicated with $X_\ell$ the largest $x_i\leq X_\ell$.

By standard properties of the conditional expectation, we can rewrite the above expectation as 
\begin{align*}
&=\mathbb{E}\bigg[\mathbb{E}\bigg[ \int_{-\infty}^{+\infty} \frac{|\mathcal{S}_{X_{\ell-1}}(1/2+it)|^2}{|1/2+it|^2}\frac{|\mathcal{S}_{X_{\ell}}(1/2+it)|^2}{|\mathcal{S}_{X_{\ell-1}}(1/2+it)|^2}dt\bigg| \mathcal{F}_{\ell}\bigg]\bigg| \Sigma_\ell\bigg]\\
&=\mathbb{E}\bigg[ \int_{-\infty}^{+\infty} \frac{|\mathcal{S}_{X_{\ell-1}}(1/2+it)|^2}{|1/2+it|^2}\mathbb{E}\bigg[ \frac{|\mathcal{S}_{X_{\ell}}(1/2+it)|^2}{|\mathcal{S}_{X_{\ell-1}}(1/2+it)|^2}\bigg| \mathcal{F}_{\ell}\bigg]dt\bigg| \Sigma_\ell\bigg],
\end{align*}
where $\mathcal{F}_{\ell}:=\sigma(\{f(p):\ p\leq X_{\ell-1}\}).$

By Lemma \ref{Eulerprodlem} and Mertens' theorem, we get
\begin{align*}
\mathbb{E}\bigg[ \frac{|\mathcal{S}_{X_{\ell}}(1/2+it)|^2}{|\mathcal{S}_{X_{\ell-1}}(1/2+it)|^2}\bigg| \mathcal{F}_{\ell}\bigg]\ll \frac{\log X_\ell}{\log X_{\ell-1}}, 
\end{align*}
which inserted back gives an overall bound for the probability in \eqref{splittinglowmomentsprob}
\begin{align*}
&\ll_\eps \frac{\sqrt{\ell^K}}{T2^{(\ell-1)^K}}\mathbb{E}\bigg[ \int_{-\infty}^{+\infty} \frac{|\mathcal{S}_{X_{\ell-1}}(1/2+it)|^2}{|1/2+it|^2}dt\bigg| \Sigma_\ell\bigg],
\end{align*}
since 
$$2^{(\ell^K-(\ell-1)^K)/(\ell-1)^K}\ll_\eps 1.$$
Finally, reminding of the definition \eqref{eventlowmoments} of the event $\Sigma_\ell$, the above expression is $\ll_\eps 1/\sqrt{T},$ since $\ell^K/(\ell-1)^K\ll_\eps 1$, which is good enough for Lemma \ref{lemS1}, since $T\geq 1$. This concludes the proof of Lemma \ref{lemS1}.
\section*{Acknowledgements}
The author would like to thank his supervisor Adam J. Harper for guiding him through the work that led to this paper. 

\end{document}